\newtheorem{thm}{Theorem}[section]
\newtheorem{cor}[thm]{Corollary}
\newtheorem{lem}[thm]{Lemma}
\newtheorem{prop}[thm]{Proposition}
\theoremstyle{definition}
\theoremstyle{remark}
\newtheorem{rem}[thm]{Remark}
\numberwithin{equation}{section}
\newcommand{\norm}[1]{\left\Vert#1\right\Vert}
\newcommand{\R}{\mathbb R}
\newcommand{\C}{\mathbb C}
\DeclareMathOperator{\Li}{Li}
\renewcommand{\Re}{\operatorname{Re}}
\renewcommand{\Im}{\operatorname{Im}}
\begin{document}


\title[A fractional derivative approximation on bounded domains] 
{A higher order resolvent-positive finite difference approximation for fractional derivatives on bounded domains} 


\author{Boris Baeumer} 
\address[Boris Baeumer and Matthew Parry]{Department of Mathematics \& Statistics, University of Otago, New Zealand}
\email{bbaeumer@maths.otago.ac.nz} 
\author{Mih\'aly Kov\'acs}
\address[Mih\'aly Kov\'acs]{Faculty of Information Technology and Bionics, P\'azm\'any P\'eter Catholic University,
Budapest, Hungary}
\address[Mih\'aly Kov\'acs]{Department of Differential Equations,
Budapest University of Technology and Economics,
Budapest,  Hungary}
\email{kovacs.mihaly@itk.ppke.hu}

\author{Matthew Parry}  \email{mparry@maths.otago.ac.nz}







\date{\today}%

\begin{abstract}We develop a finite difference approximation of order $\alpha$ for the $\alpha$-fractional derivative. The weights of the approximation scheme have the same rate-matrix type properties as the popular Gr\"unwald scheme. In particular, approximate solutions to fractional diffusion equations preserve positivity. Furthermore, for the approximation of the solution to the skewed fractional heat equation on a bounded domain the new approximation scheme keeps its order $\alpha$ whereas the order of the Gr\"unwald scheme reduces to order $\alpha-1$, contradicting the convergence rate results by Meerschaert and Tadjeran. 
\end{abstract}



\keywords{fractional calculus (primary),
 fractional partial differential equations, numerical analysis for fractional partial differential equations  }

\subjclass[MSC Classification]{26A33 (primary), 35R11,
     60M12}

\maketitle 

\section{Introduction}

In \cite{Baeumer2017c,Baeumer2018} it was shown that 
for any  $1<\alpha\le 2$, the Cauchy problem
\begin{equation}\label{FPDE}u'(t,x)=\begin{cases}\left(\frac{d}{dx}\right)^\alpha u(t,x)&x\in(0,1)\\0&\mbox{ else}\end{cases},
\end{equation} 
with initial condition $u(0,x)=u_0(x)$ is well-posed on $L_1(0,1)$ and $C_0(0,1)$ and the solution was approximated with a backward Euler (time)/Gr\"unwald (space) scheme. It was observed that as soon as the solution has mass near the left boundary the apparent order of the spatial approximation scheme reduced to order $\alpha-1$ due to the fact that at that stage the eigenfunction 
\[u_c(x)=\sum_{n=1}^\infty \frac{c^{n-1}x^{n\alpha-1}}{\Gamma(n\alpha)}\]
(with $c<0$ largest such that $\sum_{n=1}^\infty \frac{c^{n-1}}{\Gamma(n\alpha)}=0$) becomes dominant and
behaves like $x^{\alpha-1} $ near the left boundary. The fact that there are $\alpha$-times differentiable functions in $C_0(0,1)$ whose zero extension to the real line is not $\alpha$-times differentiable was overlooked in  \cite{Meerschaert2004d,Tadjeran2006} while arguing for first-order convergence of the Gr\"unwald scheme. 
\begin{figure}[htb]

\centering
\includegraphics[scale=.5]{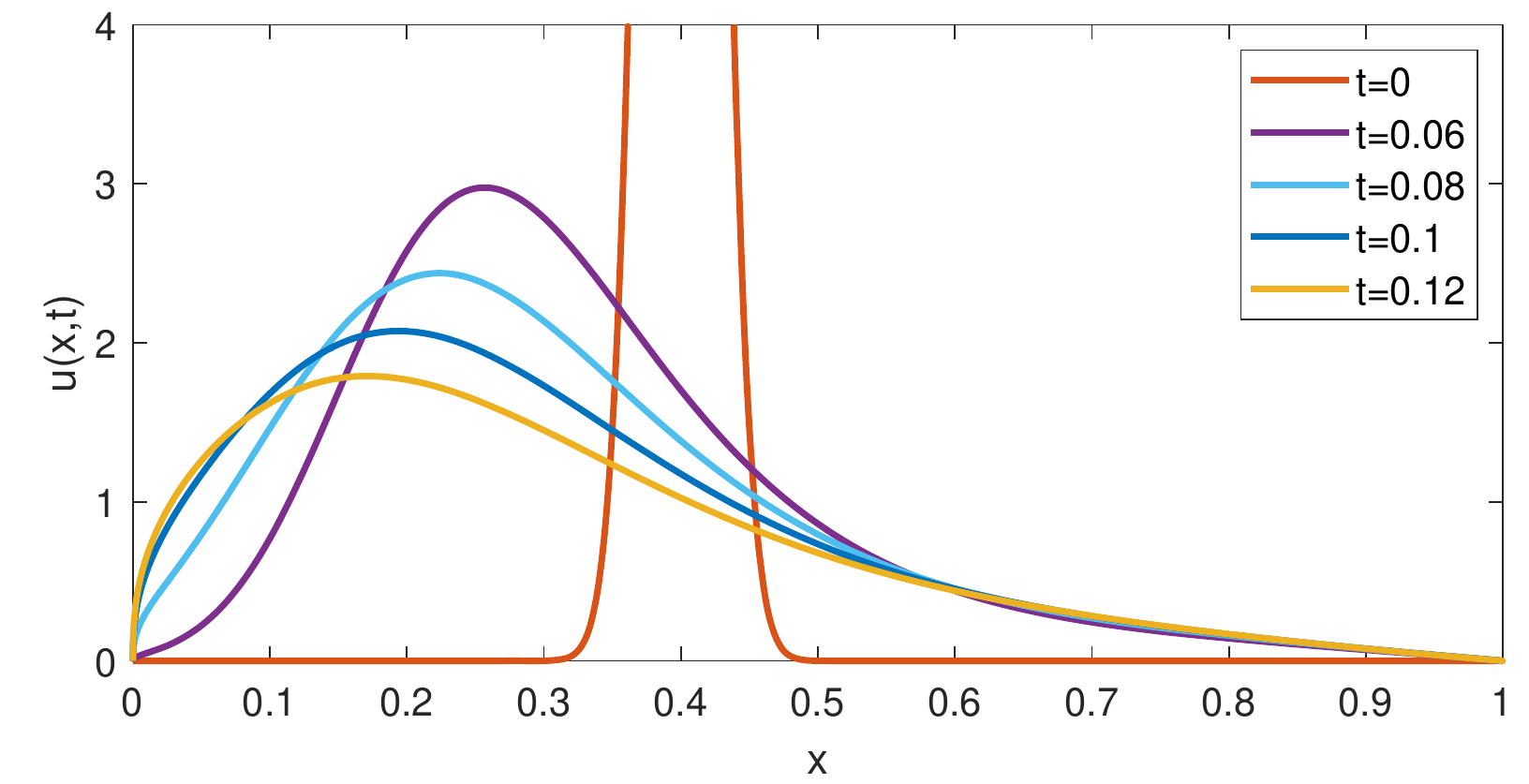}
\includegraphics[scale=.65]{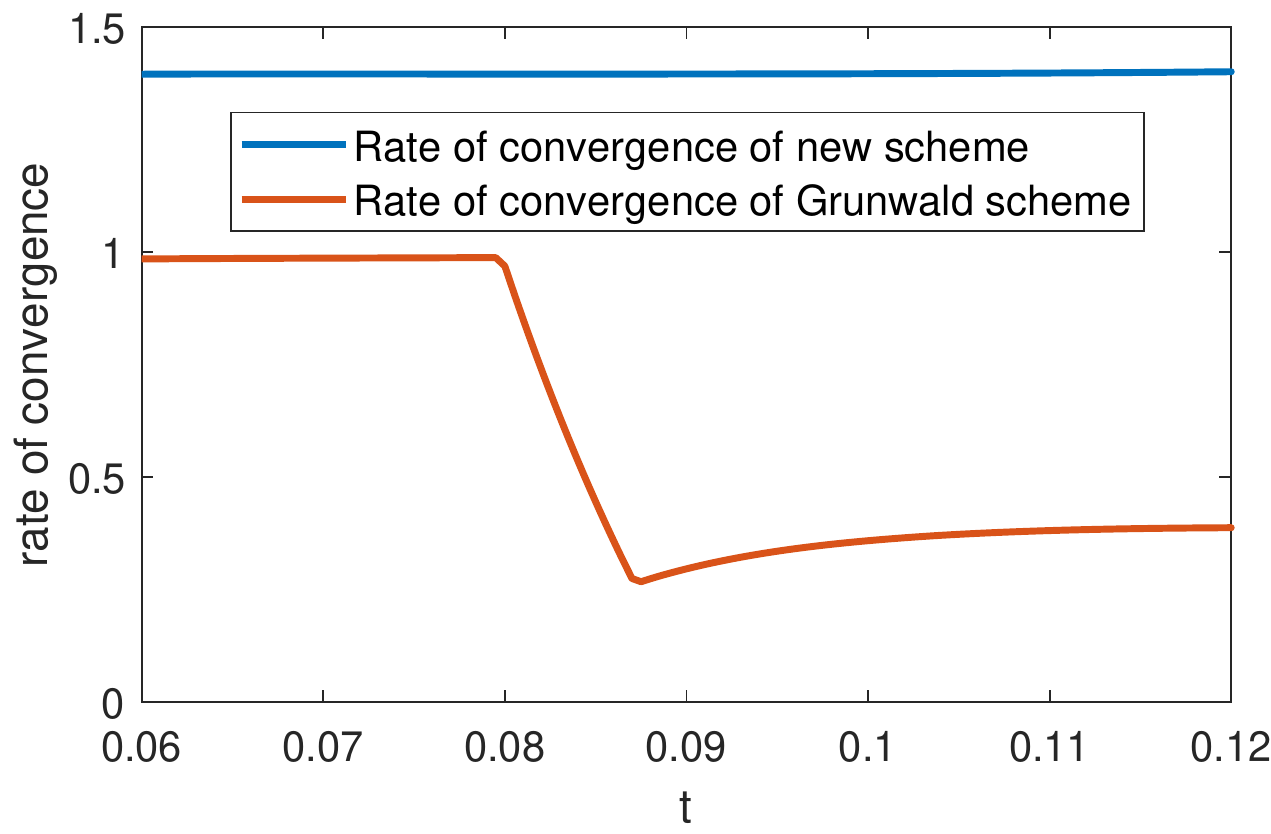}
\caption{Plot of approximate solutions and observed error rate to \eqref{FPDE} for $\alpha=1.4$ and $u_0(x)=\exp(-(x-0.4)^2/2\sigma^2)/\sqrt{2\pi\sigma^2}$ with $\sigma^2=0.0005$. The relative error for $n=400$ is about 0.3\% for the new scheme and 2\% for the Grunwald scheme.}
\end{figure}
In order to deal with this boundary behaviour we developed a new spatial scheme
\begin{equation}\label{scheme}A_h^\alpha f(x)=\frac1{h^\alpha}\sum_{n=-1}^\infty w_{n+1} f(x-nh)\end{equation}
that turns out to be of order $\alpha$ for all $2\alpha$-times differentiable functions as well as for the spatially semidiscrete approximation of \eqref{FPDE}. 

The paper is organized as follows.

In Section 2 we introduce the scheme, state the main results, and prove basic properties of the weights $w_n$. 

In Section 3 we first consider the problem on $\mathbb{R}$ and  show that  the scheme $A_h^\alpha f\to \left(\frac{d}{dx}\right)^\alpha f$ converges with order $\beta\le\alpha$ on all spaces where the shift group is strongly continuous  and $f$ is $\alpha+\beta$-times differentiable. We furthermore show that $A^\alpha_h$ generate uniformly analytic semigroups.

In Section 4 we prove that the scheme provides convergence of order $\alpha$ to solutions to \eqref{FPDE}. We use a piece-wise power interpolation, which is exact for $x^{\alpha-1}$, to obtain this rate of convergence for all $x$, not just on grid points.

\section{The scheme}
In order to avoid convergence-destroying residuals from approximating the fractional derivative of $x^{\alpha-1}/\Gamma(\alpha)$ (the $\delta$-function) we are looking for weights that make the scheme \eqref{scheme}  \emph{exact} for $x^{\alpha-1}$; i.e. we are looking for weights such that \eqref{scheme} 
 gives exactly $1/h$ at node $0$ if applied to $x^{\alpha-1}/\Gamma(\alpha)$. In particular,
\begin{equation}\label{newMatrix}
  \frac{1}{h^\alpha}\begin{pmatrix} 
       w_1 & w_0 & 0 & \ldots & 0    \\
    w_2 & w_1&    \ddots &\ddots & \vdots \\
    \vdots&\ddots&\ddots    & \ddots &0    \\
    \vdots&\ddots&\ddots&w_1&w_0  \\
    w_n &\cdots&\cdots&w_2&w_1\\
  \end{pmatrix}
  \begin{pmatrix}0\\h^{\alpha-1}\\(2h)^{\alpha-1}\\\vdots\\\vdots\\((n-1)h)^{\alpha-1}\end{pmatrix}
  =\begin{pmatrix}
  \Gamma(\alpha)/h\\0\\\vdots\\\\\vdots\\0
  \end{pmatrix}.
\end{equation}
Multiplying both sides by $h$ and shifting the vector one up (deleting the first column of the matrix) we obtain the easily and quickly solvable tri-diagonal system determining the weights $w_j$,
\begin{equation}\label{newMatrixgen}
  \begin{pmatrix} 
       w_0 &0 & \ldots & 0    \\
    w_1 & w_0&    \ddots &\vdots \\
    \vdots&\ddots&\ddots    & 0    \\
    w_{n-1}&\cdots&w_1&w_0  \\
    \end{pmatrix}
  \begin{pmatrix}1\\2^{\alpha-1}\\\vdots\\n^{\alpha-1}\end{pmatrix}
  =\begin{pmatrix}
  \Gamma(\alpha)\\0\\\vdots\\0
  \end{pmatrix}.
\end{equation}

Our two main results regarding the scheme \eqref{scheme} are as follows.
\begin{thm}\label{thm:convonR}
Let $X=C_0(\R)$ or $X=L^p(\R)$, $1\le p<\infty$. Let $A=d/dx$ and $1<\alpha\le 2$ and $0<\beta\le \alpha$.  Then there exists $C>0$ such that for $f\in D(A^{\alpha+\beta})$,
\[\|A^\alpha f-A^\alpha_h f\|\le Ch^\beta \|A^{\alpha+\beta}f\|,\]
where $A^\alpha_h f$ is the scheme defined in \eqref{scheme} using the weights satisfying \eqref{newMatrixgen}.
\end{thm}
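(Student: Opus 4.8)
The plan is to pass to the Fourier/symbol picture, reduce the claimed estimate to a single scale-invariant multiplier bound, and then transfer that bound to the operator norm on every space carrying a strongly continuous translation group. First I would read off the symbol of the scheme from the defining system \eqref{newMatrixgen}. Writing $W(z)=\sum_{n\ge0}w_nz^n$, the lower-triangular Toeplitz structure of \eqref{newMatrixgen} is exactly the statement that $W(z)\,\Li_{1-\alpha}(z)=\Gamma(\alpha)z$, since $\Li_{1-\alpha}(z)=\sum_{m\ge1}m^{\alpha-1}z^m$; hence $W(z)=\Gamma(\alpha)z/\Li_{1-\alpha}(z)$, and in particular $w_0=\Gamma(\alpha)$. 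Re-indexing \eqref{scheme} and using that the translation group $T(s)f=f(\cdot-s)$ is generated by $-A$ gives $A_h^\alpha=h^{-\alpha}T(-h)W(T(h))$, so on the Fourier side ($T(h)$ has symbol $e^{-ih\xi}$ and $A^\alpha$ has symbol $(i\xi)^\alpha$) the symbol of $A_h^\alpha$ is
\[\sigma_h(\xi)=h^{-\alpha}\,\frac{\Gamma(\alpha)}{\Li_{1-\alpha}(e^{-ih\xi})}=(i\xi)^\alpha g(h\xi),\qquad g(\theta):=\frac{\Gamma(\alpha)}{(i\theta)^\alpha\,\Li_{1-\alpha}(e^{-i\theta})}.\]
The error operator then has symbol $(i\xi)^\alpha\bigl(1-g(h\xi)\bigr)$, and since for $f\in D(A^{\alpha+\beta})$ one has $A^\alpha f-A_h^\alpha f=M_h(A)\,A^{\alpha+\beta}f$, it suffices to show the multiplier
\[M_h(\xi)=\frac{1-g(h\xi)}{(i\xi)^\beta}=h^\beta N(h\xi),\qquad N(\theta):=\frac{1-g(\theta)}{(i\theta)^\beta},\]
defines a convolution operator of norm $O(h^\beta)$.

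Next I would control $N$ pointwise on all of $\R$. Near $\theta=0$ the Lerch/polylog expansion gives, for $|\theta|<2\pi$,
\[(i\theta)^\alpha\Li_{1-\alpha}(e^{-i\theta})=\Gamma(\alpha)+(i\theta)^\alpha\sum_{k\ge0}\frac{\zeta(1-\alpha-k)}{k!}(-i\theta)^k=\Gamma(\alpha)+O(|\theta|^\alpha),\]
so $g(\theta)=1+O(|\theta|^\alpha)$ and hence $N(\theta)=O(|\theta|^{\alpha-\beta})$ stays bounded at the origin because $0<\beta\le\alpha$. For the global behaviour I would use the equivalent form
\[g(\theta)=\frac{e^{i\theta}\,W(e^{-i\theta})}{(i\theta)^\alpha},\]
which follows from $W(e^{-i\theta})\Li_{1-\alpha}(e^{-i\theta})=\Gamma(\alpha)e^{-i\theta}$. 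The summability of the weights established in Section~2 makes $W(e^{-i\theta})$ bounded and continuous, so $|g(\theta)|\le C|\theta|^{-\alpha}$; this simultaneously shows $\Li_{1-\alpha}(e^{-i\theta})$ never vanishes, that $g$ is continuous with $g(0)=1$ and $g(\theta)\to0$, and that $N(\theta)\sim(i\theta)^{-\beta}\to0$ at infinity. Splitting at $|\theta|=1$ and using $|\theta|^\alpha\le|\theta|^\beta$ for $|\theta|\le1$ and $1\le|\theta|^\beta$ for $|\theta|\ge1$ yields the scale-invariant bound $|1-g(\theta)|\le C|\theta|^\beta$, equivalently $\abs{(i\xi)^\alpha-\sigma_h(\xi)}\le Ch^\beta\abs{\xi}^{\alpha+\beta}$, which already settles the $L^2(\R)$ case by Plancherel.

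Finally, to reach $C_0(\R)$ and every $L^p(\R)$ \emph{at once}, I would avoid $p$-dependent multiplier theorems and instead represent the operator as convolution against a finite measure. The scaling $M_h(\xi)=h^\beta N(h\xi)$ gives $M_h(A)=h^\beta\int_\R T(s)\,d\nu_h(s)$, where $\nu_h$ is the dilate of the measure $\nu$ with $\widehat\nu=N$; since dilation preserves total variation and $\norm{\int_\R T(s)\,d\nu}\le\norm{\nu}$ on any space where $T$ is a strongly continuous group of isometries, this produces $\norm{A^\alpha f-A_h^\alpha f}\le Ch^\beta\norm{A^{\alpha+\beta}f}$ uniformly across the stated spaces. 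The main obstacle is exactly the membership of $N$ in the Fourier--Stieltjes algebra, i.e.\ showing $N=\widehat\nu$ for a finite measure: at the origin $N$ carries the non-smooth factor $|\theta|^{\alpha-\beta}$ inherited from the branch point of the polylogarithm, while at infinity it decays only like $|\theta|^{-\beta}$, so the borderline regularity cannot be handled by a crude $L^1$ criterion and must instead be treated by peeling off the explicit fractional-derivative (Marchaud) kernel and estimating the total variation of the remainder, using the $n^{-1-\alpha}$ decay of the weights $w_n$ from Section~2.
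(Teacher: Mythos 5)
Your reduction is exactly the one the paper uses: the symbol factorization $\sigma_h(\xi)=(i\xi)^\alpha g(h\xi)$ with $g(\theta)=\Gamma(\alpha)/((i\theta)^\alpha\Li_{1-\alpha}(e^{-i\theta}))$, the scale-invariant error multiplier $N(\theta)=(1-g(\theta))/(i\theta)^\beta$ (the paper's $\hat g_\beta$ up to sign), and the transference principle that a multiplier of the form $h^\beta N(h\xi)$, with $N$ the Fourier transform of a finite measure, yields an $O(h^\beta)$ operator bound simultaneously on $C_0(\R)$ and all $L^p(\R)$, because dilation preserves total variation and translation acts isometrically on these spaces. Your pointwise bounds on $N$ (boundedness at the origin via the zeta expansion of the polylogarithm, decay like $|\theta|^{-\beta}$ at infinity, via summability of the weights from Proposition \ref{conjecture1}) also coincide with the paper's estimates on $\hat g_\beta$.

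The gap is that the single step carrying all of the analytic content --- proving that $N$ really is the Fourier--Stieltjes transform of a finite measure --- is never carried out. You correctly observe that pointwise size and decay are insufficient (a bounded continuous symbol vanishing at infinity need not be a Fourier--Stieltjes transform), but the substitute you gesture at does not hold up. First, it invokes a decay rate $w_n\sim n^{-1-\alpha}$ that is established nowhere: Proposition \ref{conjecture1} gives the signs, the summability $\sum_j|w_j|=2|w_1|$, and an integral representation of $w_n$, but no asymptotics; extracting $n^{-1-\alpha}$ from that representation is a separate Watson-lemma-type argument you would have to supply. Second, ``peeling off the Marchaud kernel'' amounts to splitting $N(\theta)=\bigl((i\theta)^\alpha-\psi(\theta)\bigr)/(i\theta)^{\alpha+\beta}$ into the pieces $(i\theta)^{-\beta}$ and $\psi(\theta)(i\theta)^{-\alpha-\beta}$, and these are individually \emph{not} transforms of finite measures --- the Riemann--Liouville kernel $x_+^{\beta-1}/\Gamma(\beta)$ is not integrable at infinity --- so only the difference is small, and the proposed splitting destroys exactly the cancellation one needs. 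The paper closes this step by a different mechanism: it controls not only $\hat g_\beta$ but also its \emph{derivative} ($|\hat g_\beta'(k)|\le M|k|^{-\beta-1}$ for $|k|\ge1$, $\approx|k|^{\alpha-\beta-1}$ near the origin, bounded when $\beta=\alpha$), and feeds both bounds into the Carlson--Beurling inequality \eqref{Carlson} applied to a dyadic partition of unity, obtaining $\norm{\phi_j\hat g_\beta}_2^{1/2}\norm{(\phi_j\hat g_\beta)'}_2^{1/2}\le C2^{-\beta j}$ and summing over $j$. Nothing in your proposal controls any derivative (or variation) of $N$, and without some control of that type the finite-measure claim --- and with it the theorem on every space except $L^2(\R)$ --- remains unproven.
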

\begin{proof}
See Section \ref{infdomain}.
\end{proof}
We denote with $C_0(\Omega)$ the completion with respect to the supremum norm of the space of  continuous functions that are compactly supported within $\Omega$; i.e. $C(0,1)$ is the space of continuous functions $f$ that satisfy $f(0)=f(1)=0$ and $C(0,1]$ is the space of continuous functions $f$ satisfying $f(0)=0$.

On a bounded domain we have the following result regarding the solution semigroup  $\{S(t)\}_{t\ge 0}$ to \eqref{FPDE} on $C_0(0,1)$ and the matrix semigroup $\{\mathbf{S}_h(t)\}_{t\ge 0}$ solving its finite difference approximation 
\[\frac{d}{dt} \vec{u}_h(t)=M_h\vec{u}_h(t); \vec{u}_h(0)=\vec{u}_0,\]
with $h=1/(n+1)$, $\vec u_h=(u_h^1,u_h^2,\ldots,u_h^n)$, and $\mathbf{M}_h$ being the matrix on the left of \eqref{newMatrix}; that is,
\begin{equation}\label{eq:mn}
   \mathbf{M}_h:= \frac{1}{h^\alpha}\begin{pmatrix} 
       w_1 & w_0 & 0 & \ldots & 0    \\
    w_2 & w_1&    \ddots &\ddots & \vdots \\
    \vdots&\ddots&\ddots    & \ddots &0    \\
    \vdots&\ddots&\ddots&w_1&w_0  \\
    w_n &\cdots&\cdots&w_2&w_1\\
  \end{pmatrix}.
\end{equation}
\begin{thm}
Let $u_0\in C_0(0,1)$ be such that
\[u_0(x)=ax^{\alpha-1}+bx^{2\alpha-1}+\int_0^x\frac{(x-s)^{\alpha}}{\Gamma(\alpha+1)}g(s)\,ds\] for some $g\in C_0(0,1]$. Then there exists $C>0$ such that for $t>h^\alpha>0$,
\[ \left|[S(t)u_0](x_i)-\left[\mathbf{S}_h(t)\vec{u}_0\right]_i\right|\le Ch^\alpha(1+\log(t/h^\alpha))(\|u_0\|_\infty+|b|+\|g\|_\infty), \]
for all $1\le i\le n$, where $\vec{u}_0=(u_0(x_1),\dots,u_0(x_n))$.
\end{thm}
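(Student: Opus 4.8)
The plan is to control the error through the semigroup telescoping identity, which converts the global error into a time integral of a purely spatial consistency defect. Writing $r_h\colon C_0(0,1)\to\R^n$ for the grid restriction $r_h f=(f(x_1),\dots,f(x_n))$ and letting $A=(d/dx)^\alpha$ denote the generator of $S(t)$, I would differentiate $s\mapsto \mathbf{S}_h(t-s)\,r_h S(s)u_0$ and integrate to obtain
\[ r_h S(t)u_0-\mathbf{S}_h(t)r_h u_0=\int_0^t \mathbf{S}_h(t-s)\,\rho_h(s)\,ds,\qquad \rho_h(s):=\bigl(\mathbf{M}_h r_h-r_h A\bigr)S(s)u_0, \]
so that $\rho_h(s)$ is exactly the local truncation error of the scheme $A_h^\alpha$ applied to the zero-extension of $S(s)u_0$, read off at the interior nodes. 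The stability needed to estimate this integral comes for free from the resolvent-positive rate-matrix structure of $\mathbf{M}_h$: the discrete semigroups are sub-Markovian, hence $\norm{\mathbf{S}_h(\tau)}_\infty\le 1$ uniformly in $h$ and $\tau$, while the uniform analyticity established in Section~\ref{infdomain} supplies the smoothing that will interact with the short-time singularity below.

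The heart of the argument is the estimate of $\rho_h(s)$, and here the design of the weights is decisive. I would show that $S(s)u_0$ retains a representation of the same type as the data, \[ S(s)u_0=a(s)\,x^{\alpha-1}+b(s)\,x^{2\alpha-1}+r(s,\cdot), \] using that $A$ annihilates $x^{\alpha-1}$ and sends $x^{2\alpha-1}$ to a multiple of $x^{\alpha-1}$, with coefficients and remainder controlled by $\norm{u_0}_\infty$, $\abs b$ and $\norm g_\infty$. On the $x^{\alpha-1}$ component the defect vanishes identically at interior nodes, because the weights were defined in \eqref{newMatrix} precisely so that $A_h^\alpha$ reproduces $(d/dx)^\alpha x^{\alpha-1}=0$ there; this is what removes the boundary layer that degrades the Gr\"unwald scheme. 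The exponent $2\alpha-1$ is exactly the borderline power whose zero-extension fails to lie in $D(A^{2\alpha})$ (its $(2\alpha)$-th derivative behaves like $x^{-1}$), which is why it must be carried as an explicit term and estimated directly from the weight recursion, and why $\abs b$ appears in the final bound. The hypothesis $g\in C_0(0,1]$, forcing $g(0)=0$, is what makes the remainder $r(s,\cdot)$ smooth enough that its zero-extension is genuinely $2\alpha$-times differentiable, so that Theorem~\ref{thm:convonR} with $\beta=\alpha$ applies to it.

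Combining the three contributions I expect a bound of the form $\norm{\rho_h(s)}_\infty\le C h^\alpha\min\bigl(h^{-\alpha},\,1/s\bigr)(\norm{u_0}_\infty+\abs b+\norm g_\infty)$, where the $1/s$ growth as $s\to0$ reflects the analytic-semigroup smoothing estimate $\norm{A^{2\alpha}S(s)u_0}\le C s^{-1}(\abs b+\norm g_\infty)$ for data whose $A^\alpha$-image is merely bounded. Inserting this into the error representation and splitting the time integral at $s=h^\alpha$, on $(0,h^\alpha)$ the crude bound $\norm{\rho_h(s)}_\infty\le C$ together with $\norm{\mathbf{S}_h(t-s)}_\infty\le 1$ contributes $O(h^\alpha)$, while on $(h^\alpha,t)$ the bound $\norm{\rho_h(s)}_\infty\le C h^\alpha/s$ gives $C h^\alpha\int_{h^\alpha}^t ds/s=C h^\alpha\log(t/h^\alpha)$. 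Summing reproduces the claimed $C h^\alpha(1+\log(t/h^\alpha))$ and explains the hypothesis $t>h^\alpha$, which keeps the logarithm nonnegative.

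The step I expect to be the main obstacle is the sharp consistency estimate for the boundary-singular terms together with the persistence of the decomposition under the flow. After the exact cancellation of the $x^{\alpha-1}$ part, one must prove that the residual defect is genuinely $O(h^\alpha)$ down to the nodes nearest the left boundary and carries exactly the integrable time singularity $1/s$, rather than a worse rate; this demands a careful local analysis of the weights $w_n$ near the origin from \eqref{newMatrixgen}, and sharp regularity and decomposition estimates for $S(s)u_0$ as $s\to0$, which is where the resolvent-positivity and the analyticity from Section~\ref{infdomain} must be used in tandem.
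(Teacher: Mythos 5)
Your architecture (Duhamel identity, sub-Markovian stability of $\mathbf{S}_h$, splitting the time integral at $s=h^\alpha$) matches the paper's skeleton, and you correctly identify the exactness on $x^{\alpha-1}$ and the reason $|b|$ appears. But the step you flag as ``the main obstacle'' is not merely delicate --- it is false, and it is precisely the pitfall the paper warns about when it says the semigroups converge at the required rate \emph{even though in general the generators converge at a much slower rate}. Your central claim $\|\rho_h(s)\|_\infty\le Ch^\alpha/s$ cannot hold. Concretely, the component $x^{2\alpha-1}$, which persists in $S(s)u_0$, already defeats it: as computed in the paper's Lemma via \eqref{gbeta}, the defect of the scheme on $x^{2\alpha-1}/\Gamma(2\alpha)$ at node $x_i$ is exactly $h^{\alpha-1}g_\alpha(i)$, which at the first node $i=1$ is of order $h^{\alpha-1}$ (times the fixed constant $g_\alpha(1)$), not $h^\alpha$. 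Likewise your assertion that $g\in C_0(0,1]$ makes the zero extension of the remainder genuinely $2\alpha$-times differentiable fails at the \emph{right} boundary: the three terms of your decomposition do not vanish individually at $x=1$ (only their sum does), so their zero extensions have jumps there, and even the full solution's zero extension has a corner at $x=1$ (nonzero left derivative), hence lies in $X_\gamma$ only for $\gamma<2<2\alpha$; Theorem \ref{thm:convonR} with $\beta=\alpha$ is therefore not applicable to it. With only $O(h^{\alpha-1})$ sup-norm consistency, your chain ``bound $\|\rho_h(s)\|_\infty$, then use $\|\mathbf{S}_h(t-s)\|_\infty\le 1$'' delivers at best a rate $h^{\alpha-1}\log(t/h^\alpha)$, i.e.\ exactly the degraded rate the scheme was built to beat.

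The missing idea is that the boundary-concentrated defect must be absorbed by the solution operators rather than measured in sup norm: the error representation has to be rearranged so that only differences of \emph{inverses}, never of generators, are estimated. That is the paper's actual mechanism: in $\int_0^t S(r)(A^\alpha-A_h^\alpha)S_h(t-r)\Pi_nf\,dr$ one writes $A^\alpha-A_h^\alpha=A^\alpha\bigl[(A_h^\alpha)^{-1}-(A^\alpha)^{-1}\bigr]A_h^\alpha$, proves $\|\bigl((A^\alpha)^{-1}-(A_h^\alpha)^{-1}\bigr)\Pi_n g\|\le Ch^\alpha(C_g+\|g\|)$ (Proposition \ref{inverseEst}: the discrete inverse is exactly the trapezoidal-rule discretization of the continuous Green's function, composed with the power interpolation), and uses analyticity of the continuous semigroup, $\|A^\alpha S(r)\|\le C/r$, to get the integrable $1/r$ factor on $(h^\alpha,t)$, with an integration by parts handling $(0,h^\alpha)$. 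Note also that the paper places $S(r)$, not $\mathbf{S}_h(t-r)$, on the outside of the Duhamel integral, because the smoothing it needs is established for $S$ on $C_0(0,1)$ but not, uniformly in $h$, for $\mathbf{S}_h$ on the bounded domain; your orientation of the identity would require exactly that missing uniform discrete analyticity. So your proposal assembles the right ingredients, but the consistency estimate it rests on is provably false at the nodes nearest the left boundary, and the repair is the inverse-comparison argument, not a sharper local analysis of the weights.
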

\begin{proof}
See Section \ref{bddomain}.
\end{proof}

The fact that $\textbf{M}_h$ and $A_h^\alpha$ generate positive contraction semigroups follows from the following proposition showing that $M_h$ is a `$Q$-matrix'. 

\begin{prop}\label{conjecture1} Consider the weights $w_j$  on the left hand side of \eqref{newMatrixgen}. Then
$w_1<0$, $w_j>0$ for all $j\neq1$ and $\sum_{j=0}^\infty w_j=0$.
 \end{prop}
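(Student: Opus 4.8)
The plan is to pass to generating functions. Writing $W(z)=\sum_{j\ge 0}w_jz^j$ and reading the rows of \eqref{newMatrixgen} as the coefficients of a Cauchy product, the whole triangular system collapses to the single identity $W(z)\,\Li_{1-\alpha}(z)=\Gamma(\alpha)z$, that is $W(z)=\Gamma(\alpha)/Q(z)$ with $Q(z)=\sum_{k\ge 0}(k+1)^{\alpha-1}z^k=\Li_{1-\alpha}(z)/z$, a function analytic on the unit disc with $Q(0)=1$. The first two rows hand over the two ``easy'' weights at once: $w_0=\Gamma(\alpha)>0$, and from $w_1+w_0\,2^{\alpha-1}=0$ one gets $w_1=-\Gamma(\alpha)2^{\alpha-1}<0$. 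The remaining assertions, positivity of $w_j$ for $j\ge 2$ and $\sum_j w_j=0$, I would read off from the analytic structure of $W$.

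The key step is to exhibit the denominator as a Stieltjes transform. Inserting $\lambda^{\alpha-1}=\frac{\alpha-1}{\Gamma(2-\alpha)}\int_0^\infty(1-e^{-\lambda t})\,t^{-\alpha}\,dt$ with $\lambda=k+1$ and summing the geometric series in $k$, the two resulting Cauchy kernels combine through $\frac{1}{1-z}-\frac{1}{e^t-z}=\frac{e^t-1}{(1-z)(e^t-z)}$ into a single one, giving
\[Q(z)=\frac{R(z)}{1-z},\qquad R(z)=\int_0^\infty\frac{d\mu(t)}{e^t-z},\quad d\mu(t)=\frac{\alpha-1}{\Gamma(2-\alpha)}(e^t-1)\,t^{-\alpha}\,dt\ge 0.\]
After the substitution $\xi=e^t$ this says $R(z)=\int_1^\infty (\xi-z)^{-1}\,d\tilde\mu(\xi)$ for a positive measure $\tilde\mu$ on $(1,\infty)$. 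Since a Stieltjes transform of a positive measure is analytic and zero-free on $\C\setminus[1,\infty)$ (its imaginary part is nonzero off the real axis, and it is strictly positive on $(-\infty,1)$), the representation $W(z)=\Gamma(\alpha)(1-z)/R(z)$ shows that $W$ is analytic on the whole slit plane $\C\setminus[1,\infty)$, with $W(z)\to 0$ as $z\to 1$.

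With this in hand I would recover the coefficients by the Cauchy formula $w_j=\frac{1}{2\pi i}\oint_{|z|=r}z^{-j-1}W(z)\,dz$ and deform the contour onto the cut. The large-argument behaviour $\Li_{1-\alpha}(z)\to 0$ gives $W(z)=O(|z|(\log|z|)^{\alpha-1})$ as $|z|\to\infty$, so the circle of radius $R\to\infty$ contributes nothing precisely when $j\ge 2$, leaving $w_j=\frac{1}{\pi}\int_1^\infty x^{-j-1}\,\Im W(x+i0)\,dx$. By Sokhotski--Plemelj $\Im R(x+i0)=\pi\,\tilde\mu'(x)>0$ on $(1,\infty)$, whence $\Im W(x+i0)=\Gamma(\alpha)(x-1)\,\Im R(x+i0)/|R(x+i0)|^2>0$; as the kernel $x^{-j-1}$ is positive, every $w_j$ with $j\ge 2$ has one and the same sign, and the direct value $w_2=\Gamma(\alpha)(4^{\alpha-1}-3^{\alpha-1})>0$ pins that sign down to positive. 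The same representation yields $\Im W(x+i0)\sim c(x-1)^{\alpha}$ near $x=1$ and hence $w_j\sim C\,j^{-1-\alpha}$, so $\sum_j w_j$ converges absolutely and Abel's theorem identifies its value with $\lim_{z\to 1^-}W(z)=0$.

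The hard part is the passage to the Stieltjes representation of $R$: it is the one ingredient that simultaneously delivers (a) analyticity and zero-freeness of $W$ off the cut, which is what legitimises the contour deformation, and (b) the fixed sign of $\Im W$ along the cut, which is what forces all the $w_j$ with $j\ge 2$ to share a sign. Once that structural fact is secured, the polylogarithm asymptotics justifying the vanishing arc for $j\ge 2$, the jump computation, and the Abelian argument for the sum are all routine.
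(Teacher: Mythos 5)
Your proposal is correct, and while its overall skeleton matches the paper's proof --- pass to the generating function $W(z)=\Gamma(\alpha)z/\Li_{1-\alpha}(z)$, extract $w_j$ by deforming a Cauchy contour onto the cut $[1,\infty)$ (the outer circle dying precisely for $j\ge 2$), read positivity off the sign of the jump, and get $\sum_j w_j=0$ from $W(z)\to 0$ as $z\to 1$ --- the key analytic input is genuinely different. The paper needs two facts about the polylogarithm imported from the literature: (i) that $\Li_{1-\alpha}$ has no zeros in $\C\setminus[1,\infty)$ other than the simple zero at $0$ (proved via the Argument Principle and winding-number considerations, citing Gawronski--Stadtm\"uller and Peyerimhoff), and (ii) Wood's branch-cut jump formula \eqref{branch}, whose sign (through $\Gamma(1-\alpha)<0$) gives $w_j>0$. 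You replace both by a single structural fact: the subordination identity $\lambda^{\alpha-1}=\tfrac{\alpha-1}{\Gamma(2-\alpha)}\int_0^\infty(1-e^{-\lambda t})t^{-\alpha}\,dt$ exhibits $\Li_{1-\alpha}(z)/z=R(z)/(1-z)$ with $R$ the Stieltjes transform of an explicit positive measure on $(1,\infty)$, so zero-freeness off the cut is automatic ($\Im R\neq 0$ off the axis, $R>0$ on $(-\infty,1)$), and the sign of the jump follows from Sokhotski--Plemelj with a manifestly positive density; indeed, since $\Gamma(2-\alpha)=(1-\alpha)\Gamma(1-\alpha)$, your density reproduces \eqref{branch} exactly, and your final integral formula for $w_j$ coincides with the paper's. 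What each approach buys: the paper's argument is shorter given the cited polylogarithm results; yours is self-contained (it even re-derives the analytic continuation of $\Li_{1-\alpha}$ to the slit plane and the jump formula rather than quoting them), is robust against sign slips (the $w_2>0$ pinning), and yields the decay $w_j\sim Cj^{-1-\alpha}$ as a byproduct, which makes the absolute convergence of $\sum w_j$ explicit where the paper's Abel-limit step is terser.

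Two loose ends, both at the same level of ``routine'' that the paper itself allows: you still need the large-argument asymptotics of $\Li_{1-\alpha}$ (the paper's \eqref{polyasym}) to kill the outer arc --- the Stieltjes representation alone does not hand you the $(\log|z|)^{1-\alpha}$ lower bound on the big circle --- and you should say explicitly that the small arc around $z=1$ vanishes, which follows from the bound $W(z)=O(|1-z|^\alpha)$ implicit in your estimate $\Im W(x+i0)\sim c\,(x-1)^\alpha$.
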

\begin{proof}
Taking the Laplace transform of the convolution in the infinite version of \eqref{newMatrixgen} yields
$$\left(\sum_{k=0}^\infty w_k e^{-\lambda  k}\right) \left(e^{\lambda }\sum_{j=1}^\infty j^{\alpha-1}e^{-j\lambda}\right)=\Gamma(\alpha)$$
or 
\begin{align}
    \sum_{k=0}^\infty w_k e^{-\lambda  k}&=\Gamma(\alpha)\Bigg/ \left(e^{\lambda} \sum_{j=1}^\infty j^{\alpha-1}e^{-j\lambda}\right)\nonumber\\
    &=\frac{e^{-\lambda }\Gamma(\alpha)}{\Li_{1-\alpha}(e^{-\lambda })},\label{polylog}
\end{align}
 with
\[\Li_{s}(z):=\sum_{j=1}^\infty j^{-s}z^j\]
being the \emph{Polylogarithm} function. 
It is well known that the polylogarithm
for fixed $s$ is analytic in $\C\setminus[1,\infty)$ and is real on $\R\setminus[1,\infty)$. 
It has the expansion  for $|\mu|<2\pi$ and $s\neq 1,2,\ldots$ given by
\begin{equation}\label{zetaexpansion}\Li_{s}(e^{-\mu})=\Gamma(1-s)\mu^{s-1}+\sum_{j=0}^\infty \frac{\zeta(s-j)(-1)^j}{j!}\mu^j,\end{equation}
where $\zeta$ is the Riemann Zeta function (see e.g. \cite[Eq. (13)]{truesdell1945function}, also in Equation (9.3) in \cite{Wood1992}). For large moduli; i.e., for $r\to\infty$ and for $-\pi<\theta<\pi$, if $s\neq -1,-2,\ldots$  (\cite[Eq. (11.3)]{Wood1992})
 \begin{equation}\label{polyasym} \Li_{s}(-re^{i\theta})=-\frac{(i\theta+\ln r)^{s}}{\Gamma(s+1)}+O((\ln r)^{s-2} ).\end{equation}
Furthermore, along the branch cut for $x>1$ (\cite[Eq. (3.1)]{Wood1992}), 
\begin{equation}\label{branch}\lim_{\epsilon\to 0}\Im(\Li_{1-\alpha}(x\pm i\epsilon))=\pm \frac{\pi (\ln x)^{-\alpha}}{\Gamma(1-\alpha)}.\end{equation}
Finally,
for $-1<s<0$, $\Li_s$ has a single root at $0$  by
 the Argument Principle along the keyhole contour $\gamma$ of $\Li$. This is due to the fact that  $\Li_{1-\alpha}(\gamma)\cap\R^+=\Li_{1-\alpha}(\gamma\cap\R^+)$ which follows from the asymptotic behaviours of the polylogarithmic function near the branch cut and near infinity. This in turn implies that the winding number of $\Li_{1-\alpha}(\gamma)$ is one (see also \cite[Page 1]{10.1307/mmj/1029003073} and \cite[Theorem 4]{peyerimhoff1966zeros}).

Letting $t=e^{-\lambda}$, equation \eqref{polylog} turns into
\[\sum_{k=0}^\infty w_k t^k=\frac{t\Gamma(\alpha)}{\sum_{j=1}^\infty j^{\alpha-1} t^j}=\frac{t\Gamma(\alpha)}{\Li_{1-\alpha}(t)},\]
which is analytic in $\C\setminus[1,\infty)$ as $\Li_{1-\alpha}$ has only a single root at zero.
Hence by the Residue Theorem
\[w_n=\frac1{2\pi i}\int_\gamma \frac{\Gamma(\alpha)}{z^n\Li_{1-\alpha}(z)}\,dz,\] where $\gamma$ is any closed counter-clockwise simple curve around the origin.
We use a keyhole contour for $n\ge 2$, evading $[1,\infty)$. By \eqref{polyasym} and the expansion for small $z$, the integrals over the outer and inner arc converge to zero. By \eqref{branch} this leaves
\begin{align*}
w_n=&\lim_{\epsilon\to0}\frac{\Gamma(\alpha)}{2\pi i}\int_1^\infty \frac1{x^n\Li_{1-\alpha}(x+i\epsilon)}- \frac1{x^n\Li_{1-\alpha}(x-i\epsilon)}\,dx\\
=&\frac{\Gamma(\alpha)}{2\pi i}\int_1^\infty \frac1{x^n(\Re (\Li_{1-\alpha}(x))+i\pi(\ln x)^{-\alpha}/\Gamma(1-\alpha))}\\&\;\;\;\;\;\;\;\;\;\;\;\;- \frac1{x^n(\Re (\Li_{1-\alpha}(x))-i\pi(\ln x)^{-\alpha}/\Gamma(1-\alpha))}\,dx\\
=& -\frac{\Gamma(\alpha)}{\Gamma(1-\alpha)}\int_1^\infty \frac{(\ln x)^{-\alpha}}{x^n|\Li_{1-\alpha}(x)|^2}\,dx>0.
\end{align*}

Finally, taking $t\to 1$ (or $\lambda\to 0$), 
\[\sum_{k=0}^\infty w_k =
\lim_{t\to 1}\frac{t\Gamma(\alpha)}{\Li_{1-\alpha}(t)}=0,\]
and the proof is complete.
 \end{proof}

\section{Convergence on an infinite domain}\label{infdomain}
We  follow \cite{Baeumer2012a} and first show convergence of the scheme on $L^1(\R)$. This will then imply convergence on all spaces where the shift-group is strongly continuous (even more general, the appropriately adapted scheme will converge to the $\alpha$ power of the generator of a strongly continuous group). 

In order to introduce some notation define the Fourier transform for a function in $L^1(\R)$ via 
\[\hat f(k)=\int e^{-ikx}f(x)\,dx.\]
Let
\[X_\alpha:=\{f\in L^1(\R):\exists f^{(\alpha)}\in L^1(\R):\widehat{ f^{(\alpha)}}(k)=(ik)^\alpha \hat f(k) \}\]
and for $f\in X_\alpha$ define
\[\norm f_\alpha:=\norm {f^{(\alpha)}}_{L^1(\R)}.\]

\begin{thm}\label{l1convergence}
Let $1<\alpha\le 2$ and $0\le\beta\le \alpha$. Then there exists $C>0$ such that for $f\in X_{\alpha+\beta}$ with   $A_h^\alpha f$ defined via \eqref{scheme}, 
\[ \norm{A_h^\alpha f- f^{(\alpha)}}_{L^1(\R)}\le Ch^\beta\norm{f}_{\alpha+\beta}.\]
In case of $\beta=0$ and $f\in X_\alpha$, $\norm{A_h^\alpha f- f^{(\alpha)}}_{L^1(\R)}\to 0$.
\end{thm}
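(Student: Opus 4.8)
The plan is to pass to the Fourier side and reduce the whole estimate to the single statement that one $h$-independent multiplier is the Fourier transform of an $L^1$ function. Reindexing \eqref{scheme} with $k=n+1$ gives $A_h^\alpha f(x)=h^{-\alpha}\sum_{k=0}^\infty w_k f(x-(k-1)h)$, so the generating-function identity $\sum_{k=0}^\infty w_k t^k=t\Gamma(\alpha)/\Li_{1-\alpha}(t)$ from Proposition \ref{conjecture1}, evaluated at $t=e^{-ih\xi}$, yields the symbol
\[\widehat{A_h^\alpha f}(\xi)=h^{-\alpha}\psi(h\xi)\,\hat f(\xi),\qquad \psi(z):=\frac{\Gamma(\alpha)}{\Li_{1-\alpha}(e^{-iz})}.\]
Writing $\hat f(\xi)=(i\xi)^{-(\alpha+\beta)}\widehat{f^{(\alpha+\beta)}}(\xi)$ and $\widehat{f^{(\alpha)}}(\xi)=(i\xi)^\alpha\hat f(\xi)$, the error is a convolution $A_h^\alpha f-f^{(\alpha)}=K_h*f^{(\alpha+\beta)}$ with $\widehat{K_h}(\xi)=h^\beta\phi(h\xi)$, where
\[\phi(z):=\frac{\psi(z)-(iz)^\alpha}{(iz)^{\alpha+\beta}}.\]
By the scaling of the Fourier transform $K_h(x)=h^{\beta-1}g(x/h)$ with $g:=\check\phi$, hence $\norm{K_h}_{L^1(\R)}=h^\beta\norm{g}_{L^1(\R)}$, and Young's inequality gives $\norm{A_h^\alpha f-f^{(\alpha)}}_{L^1(\R)}\le h^\beta\norm{g}_{L^1(\R)}\norm{f}_{\alpha+\beta}$. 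Everything thus reduces to proving $g=\check\phi\in L^1(\R)$, which supplies the constant $C=\norm{g}_{L^1(\R)}$.

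Next I would read off the structure of $\phi$ from the polylogarithm expansions \eqref{zetaexpansion}--\eqref{branch}. Putting $\mu=iz$, \eqref{zetaexpansion} gives $\Li_{1-\alpha}(e^{-\mu})=\Gamma(\alpha)\mu^{-\alpha}+\zeta(1-\alpha)+O(\mu)$ near $z=0$, so $\psi(z)=\mu^\alpha-\frac{\zeta(1-\alpha)}{\Gamma(\alpha)}\mu^{2\alpha}+\cdots$; this is exactly the consistency of the scheme, $\psi(z)-(iz)^\alpha=O(|z|^{2\alpha})$, and forces $\phi(z)=O(|z|^{\alpha-\beta})$ as $z\to0$ with $\alpha-\beta\ge0$. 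Because $e^{-iz}$ is $2\pi$-periodic, $\psi$ is $2\pi$-periodic, and the same local expansion at each node $z=2\pi k$ shows $\psi(z)\sim (i(z-2\pi k))^\alpha\to0$; thus $\phi$ carries a fractional cusp $|z-2\pi k|^\alpha$ there (genuine for $1<\alpha<2$, smooth when $\alpha=2$) with amplitude $O(|k|^{-\alpha-\beta})$ from the denominator $(iz)^{\alpha+\beta}$. Away from $2\pi\Z$ the root count for $\Li_{1-\alpha}$ on the unit circle used in Proposition \ref{conjecture1} keeps $\psi$ analytic and bounded, so $(iz)^\alpha$ dominates for large $|z|$ and $\phi(z)\sim-(iz)^{-\beta}$, decaying only like $|z|^{-\beta}$.

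The main obstacle is the kernel estimate $\norm{\check\phi}_{L^1(\R)}<\infty$; I would carry it out following \cite{Baeumer2012a}, whose Gr\"unwald symbol $(1-e^{-iz})^\alpha$ has the identical cusp structure on $2\pi\Z$. Using a smooth partition of unity I would split $\phi$ into (i) the localised fractional-power singularities at $0$ and at the nodes $2\pi k$, (ii) a cut-off model tail $-\chi(z)(iz)^{-\beta}$, and (iii) a smooth, rapidly decaying remainder. A localised singularity $|z-2\pi k|^{\gamma}$ with $\gamma\ge\alpha-\beta>0$ transforms into a contribution $O\!\big(|k|^{-\alpha-\beta}e^{2\pi ikx}|x|^{-1-\gamma}\big)$ to $g$; since $\alpha+\beta>1$ the amplitudes are summable in $k$ and these terms add up to an $L^1$ function. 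The model tail, differentiated twice, decays like $|z|^{-\beta-2}$ (integrable at infinity) while its slow decay at infinity produces only an integrable local singularity $|x|^{\beta-1}$ of $g$ at $x=0$; the smooth remainder is dispatched by two integrations by parts. Collecting the pieces gives $g\in L^1(\R)$ and the claimed bound. The delicate point is precisely this bookkeeping: summing over the infinitely many cusps and checking that the borderline exponent $\alpha-\beta$ keeps $g$ integrable and does not degenerate as $\beta\uparrow\alpha$.

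For the final assertion $(\beta=0)$ the error multiplier $\phi(z)=\psi(z)/(iz)^\alpha-1$ tends to $-1$ instead of $0$ at infinity, so $\check\phi\notin L^1(\R)$ and no convergence rate is available. Here I would instead observe that the same symbol computation gives $A_h^\alpha f=R_h*f^{(\alpha)}$ with $\widehat{R_h}(\xi)=\rho(h\xi)$, $\rho(z):=\psi(z)/(iz)^\alpha$, and $R_h(x)=h^{-1}r(x/h)$, $r:=\check\rho$. Now $\rho(0)=1$, $\rho$ is bounded with the same cusps on $2\pi\Z$, and $\rho(z)=O(|z|^{-\alpha})$ at infinity, which is integrable because $\alpha>1$; the kernel estimate of the previous paragraph then yields $r\in L^1(\R)$ with $\int_\R r=\rho(0)=1$. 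Hence $\{R_h\}_{h>0}$ is an approximate identity and $A_h^\alpha f=R_h*f^{(\alpha)}\to f^{(\alpha)}$ in $L^1(\R)$ for every $f\in X_\alpha$, completing the proof.
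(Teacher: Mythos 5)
Your reduction is exactly the paper's: you derive the same symbol $\Gamma(\alpha)/\bigl(h^\alpha\Li_{1-\alpha}(e^{-ih\xi})\bigr)$ from Proposition \ref{conjecture1}, your error multiplier $\phi$ is precisely the paper's $\hat g_\beta$ in \eqref{gbeta}, the scaling-plus-Young step producing the factor $h^\beta$ is identical, and your approximate-identity argument for $\beta=0$ (your $\rho$ is the paper's $\hat g$, with $\rho(0)=1$) is the paper's argument verbatim. Where you genuinely diverge is in proving the one substantive fact, $\check\phi\in L^1(\R)$. The paper never localizes at the lattice $2\pi\mathbb{Z}$: it applies a Carlson--Beurling-type inequality \eqref{Carlson} to the blocks of a single dyadic partition of unity, so it needs only $\hat g_\beta$ and its \emph{first} derivative in $L^r$, and the tail bounds $|\hat g_\beta|\le M|k|^{-\beta}$, $|\hat g'_\beta|\le M|k|^{-\beta-1}$ give geometric block norms $2^{-\beta j}$. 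This is tailored to the actual regularity of the symbol: the cusps $(i(z-2\pi k))^\alpha$ are $C^1$ with derivative of size $|z-2\pi k|^{\alpha-1}$, locally $r$-integrable because $\alpha>1$, so no singularity extraction is ever required and the periodicity enters only through boundedness of ratios like $\Li_{-\alpha}(e^{-ik})/\Li_{1-\alpha}^2(e^{-ik})$. Your route -- peel off the cusps at every node, a model tail $-\chi(z)(iz)^{-\beta}$, and integrate the remainder by parts twice -- is the classical alternative; it can be made to work and yields more explicit pointwise kernel bounds, at the price of heavier bookkeeping.

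Two points in that bookkeeping are genuine loose ends, not just omitted routine. First, two integrations by parts require $L^1$ control of the \emph{second} derivative of your remainder, and removing only the leading cusp $\mu^\alpha$, $\mu=i(z-2\pi k)$, leaves terms of the form $v(z)\mu^{2\alpha}/(iz)^{\alpha+\beta}$ whose second derivative is of size $|k|^{-\alpha-\beta}|\mu|^{2\alpha-2}$; this is bounded and summable over $k$ only because $2\alpha>2$ and $\alpha+\beta>1$, and this verification (that the remainder after one-term extraction is already $C^2$ with lattice-summable bounds) is exactly the step your sketch waves at but does not do -- it is what the paper's one-derivative inequality lets it avoid entirely. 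Second, your exponent condition ``$\gamma\ge\alpha-\beta>0$'' excludes the endpoint $\beta=\alpha$, which is the case of main interest (the rate $h^\alpha$) and is included in the theorem. At $\beta=\alpha$ the node-$0$ singularity degenerates: by \eqref{zetaexpansion}, $\phi(z)\to-\zeta(1-\alpha)/\Gamma(\alpha)\neq 0$ there with bounded derivative, so that piece must be reassigned to the smooth remainder rather than treated as a cusp; you flag this as the delicate point but leave it unresolved, whereas the paper disposes of it in one line by noting $\hat g'_\alpha$ is bounded at zero. Both issues are fixable, so I would call your proposal a viable alternative proof of the key lemma, but in its present form it is a sketch with the hardest case open.
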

\begin{proof}

The Fourier transform of our approximation scheme is given by
\begin{align*}
    \widehat{A_h^\alpha f}(k)&=\frac1{h^\alpha}\sum_{n=-1}^\infty w_{n+1} e^{-iknh}\hat f(k)=\frac{\Gamma(\alpha)}{h^\alpha\Li_{1-\alpha}(e^{-ihk})}\hat f(k)\\
    &=(ik)^\alpha \hat g(kh)\hat f(k)\\
    &=(ik)^\alpha \hat f(k)+h^\beta\frac{\hat g(kh)-1}{(ikh)^\beta}(ik)^{\alpha+\beta}\hat f(k)
\end{align*}
with \[\hat g(k)
=\frac{\Gamma(\alpha)}{(ik)^\alpha\Li_{1-\alpha}(e^{-ik})}.\]
We claim that 
\begin{equation}\label{gbeta}\hat g_\beta(k):=(\hat g(k)-1)/(ik)^\beta\end{equation}
is the Fourier transform of an $L^1$-function for all $0< \beta\le \alpha$. This would prove the the theorem for all $0< \beta\le \alpha$ as the $L^1$ norm of $g_\beta$ is the same as the $L^1$-norm of $x\mapsto g_\beta(x/h)/h$.   But first we show that $\hat g$ is the Fourier transform of an $L^1$ function with $\hat g(0)=1$, which obviously implies the assertion of the theorem in case of $\beta=0$.

We are going to use a variant of the Carlson-Beurling Inequality (see \cite[Prop. 2.1]{Baeumer2012a})
\begin{equation}\label{Carlson}\| f\|_{L^1}\le C(r)\|\hat f\|_{L^r}^{\frac1s}\|\hat f'\|_{L^r}^{\frac1r},\end{equation}
where $1<r\le 2$ and $1/r+1/s=1$.
Recall that by \eqref{zetaexpansion} 
\[\Li_s(e^{-ik})=(ik)^{s-1}\Gamma(1-s)+\zeta(s)+O(k),\] 
for $|k|\le\pi$. Hence $\hat g(0)=1 $ and as $|\Li_{1-\alpha}(e^{-ik})|$ is bounded below, $\hat g\in L^2(\R)$. Furthermore,
\begin{align}\label{li1}\frac{d}{dk}\hat g(k)=&-i\frac{\alpha\Gamma(\alpha)}{(ik)^{\alpha+1}\Li_{1-\alpha}(e^{-ik})}-\frac{\Gamma(\alpha)}{(ik)^\alpha\Li_{1-\alpha}^2(e^{-ik})}(-i\Li_{-\alpha}(e^{-ik}))\\\nonumber
=&-i\hat g\left(e^{-ik}\right)\left(\frac{\alpha}{ik}-\frac{\Li_{-\alpha}(e^{-ik})}{\Li_{1-\alpha}(e^{-ik})}\right)\\\nonumber
=&-i\hat g\left(e^{-ik}\right)\frac{\alpha\Li_{1-\alpha}(e^{-ik})-ik\Li_{-\alpha}(e^{-ik})}{ik\Li_{1-\alpha}(e^{-ik})}\\
=&-i\hat g\left(e^{-ik}\right)\frac{\alpha(ik)^{\alpha-1}\Li_{1-\alpha}(e^{-ik})-(ik)^\alpha\Li_{-\alpha}(e^{-ik})}{(ik)^\alpha\Li_{1-\alpha}(e^{-ik})}=O(k^{\alpha-1}).\label{li2}
\end{align}
Note that in \eqref{li1}, the periodic function $\Li_{-\alpha}(e^{-ik})/\Li^2_{1-\alpha}(e^{-ik})$ is bounded as at the singularity $\lim_{k\to 0}\Li_{-\alpha}(e^{-ik})/\Li^2_{1-\alpha}(e^{-ik})=\lim_{k\to 0}(ik)^{-\alpha-1}/(ik)^{-2\alpha}=0$.  Equation \eqref{li2} shows that $d\hat g/dk(0)=0$ and combined with the decay at infinity given by \eqref{li1}, $d\hat g/dk\in L^2(\R)$ as well. Hence $g\in L^1(\R)$.

In order to show that $g_\beta$ is in $L^1$, note that for $|k|<\pi$,
\[\hat g_\beta(k)=\frac{\hat g(k)-1}{(ik)^\beta}=(ik)^{\alpha-\beta} (\zeta(1-\alpha)+O(k))/\Gamma(\alpha),\]
and that for large $|k|$, $|\hat g_\beta(k)|$ decays like $|k|^{-\beta}$; i.e., there exists $M>0$ such that \[\left|\hat g_\beta(k)\right|\le M|k|^{-\beta}.\]
As to its derivative,
\[\frac{d}{dk}\frac{\hat g(k)-1}{(ik)^\beta}=i\frac{k\hat g'(k)-\beta(\hat g(k)-1)}{(ik)^{\beta+1}}.
\]
So for large $|k|$ (i.e. $|k|\ge 1$) there exists $M$ such that 
\[\left|\hat g'_\beta(k)\right|\le M|k|^{-\beta-1}.\]  However for small $k$ and $\beta<\alpha$,  $|\hat g'_\beta (k)|\approx|k|^{\alpha-\beta-1}$, whereas \eqref{zetaexpansion} implies that for $\beta=\alpha$ the function is bounded in zero. 

Consider the partition of unity ($j=1,2,\ldots$)
\begin{align*}\phi_0(k):=& \begin{cases}1&|k|<1\\2-|k|&1\le|k|\le2\\0& \mbox{else}\end{cases},\\
\phi_j(k):=&\begin{cases}(|k|-2^{j-1})2^{1-j}&2^{j-1}\le |k|\le 2^j\\(2^{j+1}-|k|)2^{-j}&2^j\le |k|\le 2^{j+1}\\0&\mbox{else}\end{cases}.
\end{align*}
Then by \eqref{Carlson}, $\phi_j\hat g_\beta$ are the Fourier transforms of $L^1$ functions and for $j\ge 1$, using the tail estimates of $\hat g_\beta$ and $\hat g'_\beta$,
\[\norm{\phi_j\hat g_\beta}_2^{1/2}\norm{(\phi_j\hat g_\beta)'}_2^{1/2}\le C2^{-\beta j}.\]
As $\hat g_\beta=\sum_{j=0}^\infty \phi_j \hat g_\beta$, $g_\beta\in L^1(\R)$.

\end{proof}

\subsection{Transference to other Banach spaces}
In order to transfer Theorem \ref{l1convergence} we use the unbounded functional calculus developed in \cite{Baeumer2009a}. In particular, recall that for $\hat f(k)=\int e^{-ikt} f(t)\,dt$ and $A$ being the generator of a strongly continuous group $(G(t))_{t\in\R}$ on a Banach space $X$, one may define
\[ \hat f(-iA):=\int_{\mathbb{R}} f(t)G(t)\,dt,\]
where the integral is understood in the strong Bochner sense.

\begin{thm}
Let $-A$ be the generator of a bounded strongly continuous group $(G(t))_{t\in\R}$ on a Banach space $X$ and define
\[ A_h^\alpha f:=\frac1{h^\alpha}\sum_{n=-1}^\infty w_{n+1} G(nh)f.\]
Then for $0\le\beta<\alpha$ and $f\in D(A^{\alpha+\beta})$,
\[\norm{A^\alpha_h f-A^\alpha f}_X\le Ch^\beta \norm{A^{\alpha+\beta}f}_X.\]
In case $\beta=0$, $\norm{A^\alpha_h f-A^\alpha f}_X\to 0$.
\end{thm}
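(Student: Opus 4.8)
The plan is to transfer the $L^1(\R)$ estimate of Theorem \ref{l1convergence} to $X$ through the transference principle built into the functional calculus $\hat f(-iA)=\int_\R f(t)G(t)\,dt$. The point of departure is the symbol factorization already produced in the proof of Theorem \ref{l1convergence}: writing $m_h(k):=h^{-\alpha}\sum_{n=-1}^\infty w_{n+1}e^{-iknh}=(ik)^\alpha\hat g(kh)$ for the Fourier symbol of $A_h^\alpha$, one has
\[ m_h(k)-(ik)^\alpha=(ik)^\alpha\big(\hat g(kh)-1\big)=h^\beta\,\hat g_\beta(kh)\,(ik)^{\alpha+\beta}, \]
with $\hat g_\beta$ as in \eqref{gbeta}. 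Theorem \ref{l1convergence} furnishes $g_\beta\in L^1(\R)$, and the rescaled function $g_\beta^h(t):=h^{-1}g_\beta(t/h)$ has Fourier transform $\hat g_\beta(kh)$ and unchanged norm $\|g_\beta^h\|_{L^1}=\|g_\beta\|_{L^1}$.

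Next I would record two boundedness facts. Since Proposition \ref{conjecture1} gives $\sum_{j\ge0}|w_j|=-2w_1<\infty$, the series defining $A_h^\alpha$ converges in operator norm, so $A_h^\alpha$ is a bounded operator on $X$ with $\|A_h^\alpha\|\le M h^{-\alpha}\sum_{j\ge0}|w_j|$, where $M:=\sup_{t\in\R}\|G(t)\|<\infty$. The transference inequality $\|\hat\phi(-iA)\|\le M\|\phi\|_{L^1}$ applied to $\phi=g_\beta^h$ then makes
\[ T_h:=\int_\R g_\beta^h(t)\,G(t)\,dt \]
a bounded operator with $\|T_h\|\le M\|g_\beta\|_{L^1}$, a bound independent of $h$.

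The heart of the matter is the operator identity
\[ A_h^\alpha f-A^\alpha f=h^\beta\,T_h\,A^{\alpha+\beta}f,\qquad f\in D(A^{\alpha+\beta}), \]
from which the estimate is immediate, with $C=M\|g_\beta\|_{L^1}$:
\[ \|A_h^\alpha f-A^\alpha f\|\le h^\beta\|T_h\|\,\|A^{\alpha+\beta}f\|\le M\|g_\beta\|_{L^1}\,h^\beta\,\|A^{\alpha+\beta}f\|. \]
To establish the identity I would verify it first on the dense set of smooth vectors $f=\int_\R\rho(t)G(t)u\,dt$ with $\rho\in C_c^\infty(\R)$ and $u\in X$, which lie in $D(A^k)$ for all $k$ and form a core for $A^{\alpha+\beta}$. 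On such $f$ every symbol occurring is multiplied by the rapidly decaying $\hat\rho$, so all symbols are genuine $L^1$ Fourier transforms and the composition rule of the functional calculus of \cite{Baeumer2009a} turns the pointwise factorization above into the operator identity. Both sides are continuous in the graph norm of $A^{\alpha+\beta}$ — the left because $A_h^\alpha$ is bounded and $A^\alpha$ is $A^{\alpha+\beta}$-bounded, the right because $T_h$ is bounded — so the identity extends from the core to all of $D(A^{\alpha+\beta})$ by density.

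The main obstacle is precisely this extension: the individual symbols $m_h(k)$ and $(ik)^\alpha$ neither decay nor arise as $L^1$ Fourier transforms, so $A_h^\alpha$ and $A^\alpha$ cannot be placed inside one bounded functional calculus and subtracted — the factorization is meaningful only for the difference. Making it rigorous therefore forces the detour through a core on which the unbounded factor $A^{\alpha+\beta}$ is controlled and the composition rule is licensed, followed by a graph-norm limit. Finally, for $\beta=0$ one has $\hat g_0(0)=\hat g(0)-1=0$, hence $\int_\R g_0=0$; the corresponding operator then satisfies, after substituting $t=hs$, $T_h^{(0)}v=\int_\R g_0(s)\,G(hs)v\,ds$, and strong continuity of the group together with dominated convergence (dominating function $s\mapsto M\|v\|\,|g_0(s)|$) yields $T_h^{(0)}v\to v\int_\R g_0=0$ for each fixed $v=A^\alpha f$, which is the asserted convergence $\|A_h^\alpha f-A^\alpha f\|\to0$.
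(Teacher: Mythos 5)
For $0<\beta<\alpha$ your argument is precisely the paper's proof, with the omitted step filled in: the factorization $m_h(k)-(ik)^\alpha=h^\beta\,\hat g_\beta(kh)\,(ik)^{\alpha+\beta}$, the scale invariance $\norm{g_\beta(\cdot/h)/h}_{L^1}=\norm{g_\beta}_{L^1}$, the transference bound $\norm{\hat\phi(-iA)}_{\mathcal B(X)}\le M\norm{\phi}_{L^1}$, and the operator identity $A_h^\alpha f-A^\alpha f=h^\beta\,\hat g_\beta(-ihA)A^{\alpha+\beta}f$ are exactly the ingredients the paper uses, the paper simply asserting the identity as an instance of the functional calculus of \cite{Baeumer2009a}. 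Your verification of that identity on the core of mollified vectors, followed by graph-norm density (legitimate, since $A_h^\alpha$ is bounded by Proposition \ref{conjecture1} and $A^\alpha$ is relatively $A^{\alpha+\beta}$-bounded by the moment inequality), is a sound and worthwhile elaboration rather than a different route.

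The $\beta=0$ case, however, fails as written. Since $g\in L^1(\R)$, Riemann--Lebesgue gives $\hat g(k)\to 0$ as $|k|\to\infty$, so $\hat g_0(k)=\hat g(k)-1\to -1$; hence $\hat g_0$ is \emph{not} the Fourier transform of any $L^1(\R)$ function (distributionally $g_0=g-\delta_0$). Consequently the operator $T_h^{(0)}v=\int_\R g_0(s)G(hs)v\,ds$, the dominating function $s\mapsto M\norm{v}\,|g_0(s)|$, and the statement $\int_\R g_0=0$ do not exist, and your dominated-convergence argument has no content. The repair is what the paper does: interpret $\hat g_0(-ihA)=\hat g(-ihA)-I$, where $g\in L^1(\R)$ with $\int_\R g=\hat g(0)=1$, and write $\hat g(-ihA)v-v=\int_\R g(s)\bigl(G(hs)v-v\bigr)\,ds$; strong continuity of the group and dominated convergence, now with the genuine dominating function $(M+1)\norm{v}\,|g(s)|$, give convergence to $0$ for every $v\in X$, in particular for $v=A^\alpha f$. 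This is a one-line fix, but as submitted that step is the one genuine gap in the proposal.
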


\begin{proof}
Consider $\hat g_\beta$ of \eqref{gbeta}. Then 
\[\norm{h^\beta \hat g_\beta(-ihA)}_{\mathcal B(X)}\le Ch^\beta\]
and hence $\norm{h^\beta \hat g_\beta(-ihA)A^{\alpha+\beta}f}_X\le Ch^\beta\norm{A^{\alpha+\beta}f}_X $. But
\[ h^\beta \hat g_\beta(-ihA)A^{\alpha+\beta}f=A_h^\alpha f-A^\alpha f.\]
In case of $\beta=0$, the strong continuity of $G$ implies that \[\hat g_0(-ihA)f=\int_{\mathbb{\R}} G(s)fg(s/h)/h\,ds-f\to 0\] for all $f\in X$.
\end{proof}

\begin{cor}
The approximation scheme converges uniformly at rate $h^\alpha$ in $C_0(\R)$ for $2\alpha$-times continuously differentiable functions.
\end{cor}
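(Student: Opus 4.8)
The plan is to specialize the preceding transference theorem to $X=C_0(\R)$ with $A=d/dx$ and to run the argument at the boundary exponent $\beta=\alpha$, so that the convergence rate becomes $h^\alpha$. First I would check the hypotheses: on $C_0(\R)$ the right-translation group $G(t)f(x)=f(x-t)$ is a strongly continuous group of isometries, hence bounded, and its generator is $-A=-d/dx$. With this group the abstract scheme $\frac1{h^\alpha}\sum_{n=-1}^\infty w_{n+1}G(nh)f$ reproduces exactly the concrete scheme \eqref{scheme}, so all standing assumptions of the transference theorem are in force.

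The second, and in my view decisive, step is to justify taking $\beta=\alpha$, which is the endpoint left out of the transference theorem's stated range $0\le\beta<\alpha$. I would do this by reusing the $L^1$-membership already established: the Carlson--Beurling partition-of-unity argument in the proof of Theorem \ref{l1convergence} was carried out for every $0<\beta\le\alpha$, and the case $\beta=\alpha$ was treated explicitly there (by \eqref{zetaexpansion} the symbol $\hat g_\alpha$ is bounded at the origin, decays like $|k|^{-\alpha}$, and has derivative decaying like $|k|^{-\alpha-1}$). Hence $\hat g_\alpha$, defined by \eqref{gbeta} with $\beta=\alpha$, is the Fourier transform of an $L^1(\R)$-function. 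Inserting this into the same functional calculus used in the transference theorem, the operator $\hat g_\alpha(-ihA)=\int_\R G(s)\,g_\alpha(s/h)/h\,ds$ is bounded on $C_0(\R)$ with
\[\norm{\hat g_\alpha(-ihA)}_{\mathcal B(C_0(\R))}\le\norm{g_\alpha}_{L^1}\]
uniformly in $h$, and the identity $A_h^\alpha f-A^\alpha f=h^\alpha\hat g_\alpha(-ihA)A^{2\alpha}f$ gives $\norm{A_h^\alpha f-A^\alpha f}_\infty\le Ch^\alpha\norm{A^{2\alpha}f}_\infty$ for every $f\in D(A^{2\alpha})$.

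Finally I would translate $f\in D(A^{2\alpha})$ into classical smoothness. Since $1<\alpha\le2$ we have $2<2\alpha\le4$, and in the functional calculus $f\in D(A^{2\alpha})$ means precisely that the multiplier $(ik)^{2\alpha}\hat f(k)$ is the transform of a $C_0(\R)$-function; writing $(ik)^{2\alpha}=(ik)^{\lceil2\alpha\rceil}(ik)^{-(\lceil2\alpha\rceil-2\alpha)}$ exhibits $A^{2\alpha}f$ as a fractional integral of the integer derivative $f^{(\lceil2\alpha\rceil)}$, which again lies in $C_0(\R)$ for a $2\alpha$-times continuously differentiable $f$. The main obstacle is exactly the endpoint issue $\beta=\alpha$: one must be certain the functional-calculus bound is genuinely uniform in $h$ at the boundary exponent rather than only on the open range $\beta<\alpha$, and this is what the explicit $\beta=\alpha$ computation inherited from Theorem \ref{l1convergence} secures.
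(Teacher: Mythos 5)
Your proof is correct and takes essentially the same route as the paper: the corollary is simply the transference theorem specialized to $X=C_0(\R)$ with the translation group at the endpoint exponent $\beta=\alpha$, so that $f\in D(A^{2\alpha})$ gives the rate $h^\alpha$. Your explicit patch at the endpoint --- invoking the $L^1$-membership of $g_\alpha$ established in the proof of Theorem \ref{l1convergence} to get the uniform bound $\norm{\hat g_\alpha(-ihA)}_{\mathcal B(C_0(\R))}\le \norm{g_\alpha}_{L^1}$ --- is precisely what the paper's argument rests on (the strict inequality $\beta<\alpha$ in the transference theorem's statement is an oversight, since its proof uses only $g_\beta\in L^1(\R)$, which Theorem \ref{l1convergence} provides for all $0<\beta\le\alpha$).
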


\subsection{Uniform analyticity of approximating semigroups}
Following \cite[Section 4]{Baeumer2012a}, let 
\[\psi(k):=\frac{\Gamma(\alpha)}{\Li_{1-\alpha}(e^{-ik})}=\sum_{j=-1}^\infty w_{j+1} e^{-ikj}.\]
Using that $\sum w_j=0$, $w_1<0$ and $w_j>0$ for all $j\neq 1$, we deduce that $\Re\psi(k)<0$ for $k\neq 0$. As \[\frac{d}{dk}\psi(k)=i\frac{\Gamma(\alpha)\Li_{-\alpha}(e^{-ik})}{(\Li_{1-\alpha}(e^{-ik}))^2}\approx \frac{\Gamma(\alpha)(ik)^{-\alpha-1}}{(ik)^{-2\alpha}\Gamma(\alpha+1)}=\frac1\alpha(ik)^{\alpha-1},\] 
the conditions of \cite[Theorem 4.1]{Baeumer2012a} are satisfied.

This shows that the Fourier multipliers defined by our approximation generate uniformly analytic semigroups of angle $\alpha$ on $L_1(\mathbb{R})$. Following \cite[Section 4]{Baeumer2012a}, this implies if one employs the backward Euler scheme as time integrator, then the fully discrete scheme converges unconditionally at rate $\Delta t+(\Delta x)^\alpha\log\Delta x$ to the solution of the Cauchy problem on $\R$ for initial conditions $u_0\in D(A^\alpha)$ in any space where the translation group is strongly continuous, in particular in $L_1(\mathbb{R})$ and $C_0(\mathbb{R})$.

\section{Dirichlet problem on bounded domain}\label{bddomain}

In order to prove convergence of order $\alpha$ in space to solutions of \eqref{FPDE}, we first show that the solution semigroup is analytic  and that the matrix $M_h$ governing the approximation scheme can be used to define  generators of  contraction semigroups on finite dimensional subspaces. We then show that the semigroups converge to each other at the required rate if the initial condition is smooth enough even though in general  the generators converge at a much slower rate. 

To keep notation consistent we denote the generators in this section also by $A^\alpha$ and $A_h^\alpha$, but we would like to stress that due to the imposition of boundary conditions these operators are not fractional powers of the first derivative operator anymore.

\subsection{Analyticity of solution semigroup}
Recall from \cite{Baeumer2017c} that the solution semigroup governing \eqref{FPDE} on $C_0(0,1)$ has generator $(A^\alpha,D(A^\alpha))$ with
\begin{equation}\label{domain}D(A^\alpha):=\{f:f(x)=I^\alpha g(x)-I^\alpha g(1)x^{\alpha-1},\, g\in C_0(0,1)\}\end{equation}
and $A^\alpha f=g$ for $f\in D(A^\alpha )$; here $I^\alpha g(x)=\int_0^x\frac{(x-s)^{\alpha-1}}{\Gamma(\alpha)}g(s)\,ds$ is the $\alpha$-fractional antiderivative of $g$.

 In a weak sense, the solution to
\[\lambda f-A^\alpha f=\delta_y\] 
is given for $\Re\lambda\ge 0$ by 
\begin{align*}f_y(x):=&-H(x-y)\sum_{n=1}^\infty \frac{(x-y)^{n\alpha-1}}{\Gamma(n\alpha)}\lambda^{n-1}\\
&+\frac{\sum_{n=1}^\infty \frac{(1-y)^{n\alpha-1}}{\Gamma(n\alpha)}\lambda^{n-1}}{\sum_{n=1}^\infty \frac{1}{\Gamma(n\alpha)}\lambda^{n-1}}\sum_{n=1}^\infty \frac{x^{n\alpha-1}}{\Gamma(n\alpha)}\lambda^{n-1}.\end{align*}
Hence
\[R(\lambda,A^\alpha )g(x)=\int_0^1 g(y)f_y(x)\,dy.\]
The function $f_y$ can be expressed in terms of the Mittag-Leffler function
\[E_{\alpha,0}(z)=\sum_{n=1}^\infty \frac{z^n}{\Gamma(n\alpha)}\] via
\begin{align*}
    f_y=&-H(x-y)\frac1{\lambda(x-y)}E_{\alpha,0}(\lambda (x-y)^\alpha)\\&+\frac1{\lambda(1-y)x}\frac{E_{\alpha,0}(\lambda (1-y)^\alpha)}{E_{\alpha,0}(\lambda)}E_{\alpha,0}(\lambda x^\alpha).
\end{align*}
For $|z|$ large with $\arg z<\alpha\pi/2$ it has the asymptotic expansion (see  \cite[\S18]{Bateman1953})
\[E_{\alpha,0}(z)= \frac{z^{1/\alpha}}{\alpha}\exp(z^{1/\alpha})-\frac1{z\Gamma(-\alpha)}+O(|z|^{-2}).\]
To show that $\lambda R(\lambda,A^\alpha )$ is bounded we start with an estimate for the Green's function $f_y$.
\begin{prop}\label{propanalytic}
Let $|\lambda|$ be large enough so that $|E_{\alpha_,0}(\lambda)|\ge \left|\frac{\lambda^{1/\alpha}}{2\alpha}\exp(\lambda^{1/\alpha})\right|$. Then there exists $M>0$ such that for all $x,y\in (0, 1)$,
\[|f_y(x)|\le \begin{cases} M \left|\frac{\lambda^{1/\alpha}}{\lambda}\right|\exp(-(y-x)\Re\lambda^{1/\alpha})& x<y+|\lambda|^{-1/\alpha}\\
M /(|\lambda|^2(x-y)^{1+\alpha})& \mbox{else}\end{cases}.\]
\end{prop}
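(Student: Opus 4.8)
The plan is to put the Green's function into a product form in which the leading Mittag-Leffler exponentials cancel. Set $\mu:=\lambda^{1/\alpha}$, noting $\Re\mu>0$ (we work in the sector $|\arg\lambda|<\alpha\pi/2$ in which the stated expansion of $E_{\alpha,0}$ holds). Introducing the scalar kernel $G(z):=E_{\alpha,0}(\lambda z^\alpha)/(\lambda z)$, a direct computation rewrites the Green's function as
\[f_y(x)=-H(x-y)\,G(x-y)+\frac{G(x)\,G(1-y)}{G(1)}.\]
From the two-term asymptotics one reads off, for $\lambda z^\alpha$ large, $G(z)=g(z)+r(z)$ with leading part $g(z)=\frac{\mu^{1-\alpha}}{\alpha}e^{\mu z}$ and remainder $|r(z)|\le C|\lambda|^{-2}z^{-1-\alpha}$; from the power series, $|G(z)|\le Cz^{\alpha-1}$ once $|\lambda z^\alpha|\le 1$. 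Note $|g(z)|=|\mu^{1-\alpha}|e^{\Re\mu z}$ and $|\mu^{1-\alpha}|=|\lambda^{1/\alpha}/\lambda|$.

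First I would establish the uniform estimate $|G(z)|\le C|\mu^{1-\alpha}|e^{\Re\mu z}$ on $0<z\le 2$, patching the small-argument bound ($z^{\alpha-1}\le|\lambda|^{(1-\alpha)/\alpha}=|\mu^{1-\alpha}|$ for $z\le|\lambda|^{-1/\alpha}$), the large-argument bound, and continuity of $E_{\alpha,0}$ on the compact intermediate range $1\le|\lambda z^\alpha|\le R_0$. The hypothesis supplies the matching lower bound $|G(1)|=|E_{\alpha,0}(\lambda)|/|\lambda|\ge|\mu^{1-\alpha}|e^{\Re\mu}/(2\alpha)$. Together these give
\[\Big|\frac{G(x)G(1-y)}{G(1)}\Big|\le 2\alpha C^2|\mu^{1-\alpha}|e^{\Re\mu(x-y)}\]
for all $x,y\in(0,1)$, and $|H(x-y)\,G(x-y)|\le C|\mu^{1-\alpha}|e^{\Re\mu(x-y)}$ when $x>y$. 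Since $e^{\Re\mu(x-y)}=e^{-(y-x)\Re\mu}$, this is exactly the first stated estimate, so the regime $x<y+|\lambda|^{-1/\alpha}$ is finished.

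For $x\ge y+|\lambda|^{-1/\alpha}$ the constraints $0<y<x<1$ force $x-y$, $x$ and $1-y$ to all exceed $|\lambda|^{-1/\alpha}$, so every Mittag-Leffler argument is large. The crux is the elementary identity $g(x)g(1-y)/g(1)=g(x-y)$, which makes the leading exponential of $G(x)G(1-y)/G(1)$ cancel the leading exponential $-g(x-y)$ of the homogeneous term exactly. What remains is $-r(x-y)$ together with the corrections obtained by expanding $G(x)G(1-y)/G(1)$ about $g(x)g(1-y)/g(1)$; the denominator is handled by a geometric series since $|r(1)/g(1)|\to 0$. Each surviving piece carries a factor $e^{-\Re\mu y}$, $e^{-\Re\mu(1-x)}$ or $e^{-\Re\mu(1-(x-y))}$, all $\le 1$ by $\Re\mu>0$ and $0<x-y<1$, so that after using $x,1-y\ge x-y$ the terms $|r(x-y)|$, $e^{-\Re\mu y}|r(x)|$ and $e^{-\Re\mu(1-x)}|r(1-y)|$ are each $\le C|\lambda|^{-2}(x-y)^{-1-\alpha}$; the remaining products of two or more remainders are of the same or smaller order and are absorbed into the constant. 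This yields the sharper estimate $M/(|\lambda|^2(x-y)^{1+\alpha})$.

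The main obstacle is keeping the error control uniform after the cancellation, and in particular the fact that the two-term asymptotics only hold once $\lambda z^\alpha$ is genuinely large, whereas at the case boundary $x-y=|\lambda|^{-1/\alpha}$ the argument $\lambda(x-y)^\alpha$ has modulus exactly $1$. I would resolve this by splitting the second regime at $x-y=R_0^{1/\alpha}|\lambda|^{-1/\alpha}$: on the transition band $|\lambda|^{-1/\alpha}\le x-y\le R_0^{1/\alpha}|\lambda|^{-1/\alpha}$ the exponential $e^{\Re\mu(x-y)}$ is bounded and both the already-proven first estimate and the target are comparable to $|\mu^{1-\alpha}|$, so enlarging $M$ makes the first estimate imply the second; only for $x-y\ge R_0^{1/\alpha}|\lambda|^{-1/\alpha}$ is the cancellation argument invoked. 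Throughout, the care lies in keeping every implied constant independent of $x$, $y$ and $\lambda$.
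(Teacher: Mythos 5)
Your proposal is correct and is essentially the paper's own argument in cleaner notation: your uniform bound $|G(z)|\le C|\mu^{1-\alpha}|e^{\Re\mu z}$ is exactly the paper's estimate \eqref{MLestimate} divided by $|\lambda|$, the hypothesis enters identically (as a lower bound on the denominator $E_{\alpha,0}(\lambda)$), and the far-field case rests on the same exact cancellation of leading exponentials --- your identity $g(x)g(1-y)/g(1)=g(x-y)$ is precisely the cancellation in the paper's six-term expansion --- followed by the same bounds on cross terms involving the asymptotic remainder. The only differences are refinements of points the paper passes over silently, namely your explicit handling of the intermediate range $1\le|\lambda z^{\alpha}|\le R_0$ where the asymptotic error bound is not yet literally available, and of the transition band at $x-y\approx|\lambda|^{-1/\alpha}$.
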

\begin{proof}
Note that
\begin{align*}|f_y(x)|\le & \left|\frac{2\alpha\exp(-\lambda^{1/\alpha})}{\lambda^{1+1/\alpha}}\right|\times\\&\left|\frac{E_{\alpha,0}(\lambda(1-y)^\alpha)}{1-y}\frac{E_{\alpha,0}(\lambda x^\alpha)}{x}-H(x-y)\frac{E_{\alpha,0}(\lambda(x-y)^\alpha)}{x-y}E_{\alpha,0}(\lambda)\right|.
\end{align*}
Furthermore, for $|\lambda| r^\alpha<1$, there exists $M>0$ such that 
\[\left|\frac{E_{\alpha,0}(\lambda r^\alpha)}{r}\right|\le M|\lambda|r^{\alpha-1}\le M|\lambda|^{1/\alpha}\] 
and for $|\lambda| r^\alpha\ge1$,
\[\left|\frac{E_{\alpha,0}(\lambda r^\alpha)}{r}\right|=\left|\lambda^{1/\alpha}
\exp(\lambda^{1/\alpha}r)/\alpha+h(\lambda r^\alpha)/r\right|\le |\lambda^{1/\alpha}|\left(|\exp(\lambda^{1/\alpha}r)|+M\right);\]
with $h$ being the tail of the asymptotic expansion satisfying $|h(z)|\le M/|z|$. 
In both cases we obtain
\begin{equation}
    \label{MLestimate}
    \left|\frac{E_{\alpha,0}(\lambda r^\alpha)}{r}\right|\le M|\lambda|^{1/\alpha}\exp(r\Re \lambda^{1/\alpha}),
\end{equation}
proving the estimate for $|f_y(x)|$ in case of $x<y+|\lambda|^{-1/\alpha}$. 

In case of $x\ge y+|\lambda|^{-1/\alpha}$, 
\begin{align*}|f_y(x)|\le & \left|\frac{2\alpha\exp(-\lambda^{1/\alpha})}{\lambda^{1+1/\alpha}}\right|\times\\& \left|\frac{h(\lambda(1-y)^\alpha)}{1-y}\frac{\lambda^{1/\alpha}}{\alpha}\exp(\lambda^{1/\alpha}x)+\frac{h(\lambda x^\alpha)}{x}\frac{\lambda^{1/\alpha}}{\alpha}\exp(\lambda^{1/\alpha}(1-y))\right.\\
&\left.+\frac{h(\lambda(1-y)^\alpha)}{1-y}\frac{h(\lambda x^\alpha)}{x}-\frac{h(\lambda(x-y)^\alpha)}{x-y}\frac{\lambda^{1/\alpha}}{\alpha}\exp(\lambda^{1/\alpha})\right.\\
&-\left.h(\lambda ) \frac{\lambda^{1/\alpha}}{\alpha}\exp(\lambda^{1/\alpha}(x-y))-\frac{h(\lambda(x-y)^\alpha)}{x-y}h(\lambda )\right|\\
\le & \frac{M}{|\lambda^2|(x-y)^{1+\alpha}}.
\end{align*}
\end{proof}

\begin{thm}
Let $(A^\alpha ,D(A^\alpha ))$ be the generator of the semigroup given by the fractional derivative operator of order $1<\alpha\le 2$ on $C_0(0,1)$ or $L^1[0,1]$ with Dirichlet boundary conditions. Then there exists $M>0$ such that for all $\Re\lambda\ge 0$,
\[\norm{ R(\lambda,A^\alpha )}\le M/|\lambda|;\] i.e., the semigroup is analytic.
\end{thm}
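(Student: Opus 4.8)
The plan is to exploit that the resolvent acts as an integral operator, $R(\lambda,A^\alpha)g(x)=\int_0^1 g(y)f_y(x)\,dy$, and to control its norm by the Schur test applied to the two marginals of the kernel $f_y(x)$ estimated in Proposition \ref{propanalytic}. For the supremum norm on $C_0(0,1)$ it suffices to show $\sup_{x}\int_0^1|f_y(x)|\,dy\le M/|\lambda|$, whereas for $L^1[0,1]$ it suffices to show $\sup_{y}\int_0^1|f_y(x)|\,dx\le M/|\lambda|$. Since the two-case bound of Proposition \ref{propanalytic} is symmetric in $x$ and $y$ (the exponential and the polynomial tail depend only on the difference $x-y$), both marginals are estimated by the same computation, so I only need to carry out the inner integral once.

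The key geometric fact I would record first is that for $\Re\lambda\ge 0$ one has $\arg\lambda^{1/\alpha}\in[-\pi/(2\alpha),\pi/(2\alpha)]$, and since $\alpha>1$ this angle is strictly below $\pi/2$; hence there is a constant $c>0$ depending only on $\alpha$ with $\Re\lambda^{1/\alpha}\ge c|\lambda|^{1/\alpha}$. With this in hand I would split the inner integral according to the two regimes of Proposition \ref{propanalytic}. In the near-diagonal regime $x<y+|\lambda|^{-1/\alpha}$ the kernel is at most $M|\lambda^{1/\alpha}/\lambda|\exp(-(y-x)\Re\lambda^{1/\alpha})$; integrating the exponential over a half-line produces a factor $1/\Re\lambda^{1/\alpha}\le (1/c)|\lambda|^{-1/\alpha}$, and since $|\lambda^{1/\alpha}/\lambda|=|\lambda|^{1/\alpha-1}$ the product is $O(|\lambda|^{-1})$. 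The overshoot $x-y<|\lambda|^{-1/\alpha}$ only contributes a boundary factor $\exp(|\lambda|^{-1/\alpha}\Re\lambda^{1/\alpha})\le e$ (because $\Re\lambda^{1/\alpha}\le|\lambda|^{1/\alpha}$), which is absorbed into the constant. In the off-diagonal regime $x-y\ge|\lambda|^{-1/\alpha}$ the bound $M/(|\lambda|^2(x-y)^{1+\alpha})$ integrates, via $u=x-y$, to $\tfrac{M}{\alpha|\lambda|^2}(|\lambda|^{-1/\alpha})^{-\alpha}=\tfrac{M}{\alpha|\lambda|}$. Summing the two contributions yields the desired $M/|\lambda|$ for all $|\lambda|$ large enough for Proposition \ref{propanalytic} to apply.

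Finally I would patch in the small-$|\lambda|$ range. I would fix $R_0$ so large that the estimate above holds for $|\lambda|\ge R_0$. Since the representation of $f_y$ is valid for all $\Re\lambda\ge 0$ and defines a bounded operator for each such $\lambda$, the map $\lambda\mapsto R(\lambda,A^\alpha)$ is norm-continuous on the compact set $\{\Re\lambda\ge 0,\ |\lambda|\le R_0\}$ and hence bounded there by some $K<\infty$; consequently $\|R(\lambda,A^\alpha)\|\le K\le KR_0/|\lambda|$ on that set, and enlarging $M$ completes the bound on all of $\Re\lambda\ge 0$. The main obstacle is the exponential integral in the near-diagonal regime: everything hinges on the sectorial estimate $\Re\lambda^{1/\alpha}\ge c|\lambda|^{1/\alpha}$, which is precisely where $\alpha>1$ enters, and on verifying that the prefactor power $|\lambda|^{1/\alpha-1}$ exactly cancels the $|\lambda|^{-1/\alpha}$ gained from integration so that the sharp decay rate $|\lambda|^{-1}$ survives rather than a weaker power.
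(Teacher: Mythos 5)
Your proposal is correct and takes essentially the same route as the paper: both control $\sup_{x}\int_0^1|f_y(x)|\,dy$ (and, by the symmetry of the kernel bounds, $\sup_{y}\int_0^1|f_y(x)|\,dx$ for $L^1[0,1]$) by splitting according to the two regimes of Proposition \ref{propanalytic}, integrating the exponential via the sectorial estimate $\Re\lambda^{1/\alpha}\ge \cos(\pi/(2\alpha))\,|\lambda|^{1/\alpha}$ and the power tail to get $M/(\alpha|\lambda|)$. Your explicit compactness argument for the small-$|\lambda|$ range is a small completeness improvement, since the paper's computation only covers $|\lambda|$ large enough for Proposition \ref{propanalytic} to apply.
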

\begin{proof}
Note that from above,
\[ R(\lambda, A^\alpha ) g(x)=\int_0^1 f_y(x) g(y)\,dy.\]
Hence, by Proposition \ref{propanalytic}, if the underlying Banach space is $X=C_0(0,1)$, 
\begin{align*}\norm{R(\lambda,A^\alpha )}=&\sup_{\norm g=1}\norm{R(\lambda, A^\alpha )g}=\sup_{\norm g=1}\sup_{x\in(0,1)}\left|\int_0^1 f_y(x)g(y)\,dy\right|\\
\le&\sup_{x\in(0,1)} \int_0^1 \left|f_y(x)\right|\,dy\\
=&\sup_{x\in(0,1)}\int_0^{x-|\lambda|^{-1/\alpha}\vee 0}|f_y(x)|\,dy+\int_{x-|\lambda|^{-1/\alpha}\vee 0}^1|f_y(x)|\,dy\\
\le &\sup_{x\in(0,1)}\frac{M}{|\lambda|^2}\int_0^{x-|\lambda|^{-1/\alpha}\vee 0}\frac1{(x-y)^{1+\alpha}}\,dy\\
&+\frac{M|\lambda|^{1/\alpha}}{|\lambda|}\int_{x-|\lambda|^{-1/\alpha}\vee0}^1\exp(-(y-x)\Re\lambda^{1/\alpha})\,dy\\
\le&\frac{M}{\alpha|\lambda|^2}|\lambda|+\frac{M}{|\lambda|}\frac{|\lambda|^{1/\alpha}}{\Re\lambda^{1/\alpha}}\exp\left(|\lambda|^{-1/\alpha}\Re\lambda^{1/\alpha}\right)\\
=&\frac{M}{\alpha|\lambda|}+\frac{M}{|\lambda|} \frac{\exp(\cos(\pi/2\alpha))}{\cos(\pi/2\alpha)}.
\end{align*}
Similarly for $X=L^1[0,1]$, one has
\[\norm{R(\lambda,A^\alpha )}\le\sup_{y\in[0,1]}\int_0^1 |f_y(x)|\,dx\le M/|\lambda|.\] 
\end{proof}

\subsection{Piece-wise power interpolation}
As the rate of convergence hinges on mapping the function $x^{\alpha-1}$ exactly, we introduce a \emph{piece-wise power interpolation.}
For an $n$-dimensional vector $y=(y_1,\ldots,y_n)$ define
\[(E_n y)(x):= \frac{y_{i+1}-y_i}{x_{i+1}^{\alpha-1}-x_i^{\alpha-1}}x^{\alpha-1}+\frac{x_{i+1}^{\alpha-1}y_i-x_i^{\alpha-1}y_{i+1}}{x_{i+1}^{\alpha-1}-x_i^{\alpha-1}},\]
where $ih=x_i\le x\le x_{i+1}=(i+1)h$ and $(n+1)h=1$ and $y_0=y_{n+1}=0$.

Let 
$\Pi_n:C(0,1)\to C(0,1)$ be the projection to the $n$-dimensional power interpolation subspace $V_n$ via
\[\Pi_n f:=E_n(f(x_i)_{i=1,\dots,n}),\]
where \[V_n=\{\Pi_nf:f\in C_0(0,1)\}\subset C_0(0,1).\]
Note that $\Pi_n f(x_i)=f(x_i)$ for all $x_i=ih$, $i=0,\ldots,n+1$. Furthermore, for continuous functions in $C_0(0,1]$ we extend the projection in the obvious way. In particular, $\Pi_n(ax^{\alpha-1})=ax^{\alpha-1}$.

Define \[A^\alpha _hf:= E_n\mathbf{M}_h(f(x_i))_{i=1,\dots,n},\] where $\mathbf{M}_h$ is given by \eqref{eq:mn}.
By mapping the values on the grid points $x_i, 1\le i\le n$ and the properties of the weights given in Proposition \ref{conjecture1}; i.e. $\mathbf{M}_h$ is a so-called $Q$-matrix, the operator $A^\alpha _h$ is the generator of a contraction semigroup $\{S_h(t)\}_{t\geq 0}$ on the $n$-dimensional power interpolation subspace $V_n$ and is a bounded linear operator on $C_0(0,1)$. For $f\in C_0(0,1]$ we extend $A^\alpha _h$ by increasing the dimension of $\mathbf{M}_h$ by one and setting the last row equal to zero. In particular $A^\alpha _h x^{\alpha-1}=0$.

\begin{prop}\label{projectionrate}
Let $(A^\alpha ,D(A^\alpha ))$ be the generator of the semigroup given by the fractional derivative operator of order $1<\alpha\le 2$ on $C_0(0,1)$ and
let $f\in D(A^\alpha )$ and $(n+1)h=1$. Then there exists $C>0$ such that 
\[\|\Pi_n f-f\|\le Ch^\alpha\|A^\alpha f\|.\]
\end{prop}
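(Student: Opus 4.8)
The plan is to reduce the claim to an interpolation-error estimate for the fractional antiderivative $G:=I^\alpha g$, where $g:=A^\alpha f\in C_0(0,1)$, and then to exploit that the power interpolation reproduces $x^{\alpha-1}$ exactly. Writing $f=I^\alpha g-I^\alpha g(1)\,x^{\alpha-1}$ as in \eqref{domain} and using linearity of $\Pi_n$ together with $\Pi_n(x^{\alpha-1})=x^{\alpha-1}$ (working with the extension of $\Pi_n$ to $C_0(0,1]$, under which $G\in C_0(0,1]$ is interpolated at every node $x_0,\dots,x_{n+1}$), the singular term cancels and
\[\Pi_n f-f=\Pi_n G-G.\]
Since $\|A^\alpha f\|=\|g\|_\infty$, it then suffices to prove $\|\Pi_n G-G\|_\infty\le Ch^\alpha\|g\|_\infty$.

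Next I would estimate the error on a single grid interval $[x_i,x_{i+1}]$. There $\Pi_n G$ is the unique function of the form $a+b\,\phi$, with $\phi(x):=x^{\alpha-1}$, agreeing with $G$ at both endpoints, so $E:=G-\Pi_n G$ vanishes at $x_i$ and $x_{i+1}$ and $b=\Delta G/\Delta\phi$ with $\Delta G:=G(x_{i+1})-G(x_i)$, $\Delta\phi:=\phi(x_{i+1})-\phi(x_i)$. Writing $E(x)=\int_{x_i}^{x}(G'(s)-b\,\phi'(s))\,ds$ and applying the weighted mean value theorem (valid because $\phi'\ge0$ is integrable and $G'/\phi'$ is continuous, with limit $0$ at the origin) to replace $b$ by $G'(\xi)/\phi'(\xi)$ for some $\xi\in(x_i,x_{i+1})$, I obtain the clean identity
\[E(x)=\int_{x_i}^{x}\phi'(s)\,\big(\rho(s)-\rho(\xi)\big)\,ds,\qquad \rho(s):=\frac{G'(s)}{\phi'(s)}=\frac{s^{2-\alpha}G'(s)}{\alpha-1}.\]
Hence $|E(x)|\le\operatorname{osc}_{[x_i,x_{i+1}]}(\rho)\cdot\Delta\phi$, and the whole problem collapses to bounding the oscillation of $\rho$ against $\Delta\phi=h^{\alpha-1}\big((i+1)^{\alpha-1}-i^{\alpha-1}\big)$.

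The two regularity inputs I would use are that $G'=I^{\alpha-1}g$ is H\"older continuous of order $\alpha-1$ with seminorm $\le C\|g\|_\infty$, and that $G'(0)=0$, whence $|G'(s)|\le C\|g\|_\infty s^{\alpha-1}$; both follow from the standard mapping property of the fractional integral $I^{\alpha-1}$ of order $\alpha-1\in(0,1)$, which can be verified directly. Splitting
\[\rho(t)-\rho(s)=\frac1{\alpha-1}\Big(t^{2-\alpha}\big(G'(t)-G'(s)\big)+\big(t^{2-\alpha}-s^{2-\alpha}\big)G'(s)\Big)\]
and inserting these estimates, one checks on each interior interval ($i\ge1$) that the powers of $h$ and of the grid index $i$ recombine to exactly $h^\alpha$.

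The main obstacle is the behaviour on the intervals nearest the left endpoint, where $G$ has unbounded second derivative (it behaves like $x^{\alpha}$, so $G''\sim x^{\alpha-2}$) and the classical piecewise-linear bound $\tfrac12\|G''\|_\infty h^2$ is unavailable. This is exactly where exactness on $x^{\alpha-1}$ pays off. On the first interval $[0,h]$ one has $\Delta\phi=h^{\alpha-1}$, so one needs $\operatorname{osc}_{[0,h]}(\rho)\le Ch\|g\|_\infty$, which I would obtain \emph{not} from Lipschitz continuity of $\rho$ (it has none uniformly, since its derivative involves $g'$) but directly from $G'(0)=0$: the bound $|\rho(s)|\le C\|g\|_\infty s$ shows $\rho$ is itself $O(h\|g\|_\infty)$ there. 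For the interior intervals the delicate term is $(t^{2-\alpha}-s^{2-\alpha})G'(s)$, which must be controlled by the mean value bound $|t^{2-\alpha}-s^{2-\alpha}|\le(2-\alpha)x_i^{1-\alpha}h$ rather than the cruder H\"older bound $|t-s|^{2-\alpha}$, so that the factor $x_i^{1-\alpha}$ cancels against $|G'(s)|\le C\|g\|_\infty x_i^{\alpha-1}$ and leaves the required rate $h^\alpha$ uniformly in $i$. Assembling the per-interval bounds gives $\|\Pi_n G-G\|_\infty\le Ch^\alpha\|g\|_\infty$, and hence the proposition.
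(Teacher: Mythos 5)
Your proof is correct, and it shares the paper's overall skeleton: reduce via \eqref{domain} and exactness of the power interpolation on $x^{\alpha-1}$ to the error $\Pi_n G-G$ for $G=I^\alpha g$ (with the extension of $\Pi_n$ to $C_0(0,1]$), rely on precisely the two regularity facts that $G'=I^{\alpha-1}g$ is $(\alpha-1)$-H\"older with seminorm $\le C\|g\|_\infty$ and that $|G'(s)|\le C\|g\|_\infty s^{\alpha-1}$, and treat the first cell $[0,h]$ separately. Where you genuinely diverge is the per-cell mechanism. The paper writes $\Pi_nG(x)-G(x)$ as a weighted combination of $G(x_i+h)-G(x)$ and $G(x_i)-G(x)$, shows by explicit manipulation of the integral remainders that each equals its first-order Taylor term up to $O(h^\alpha)\|g\|$, and then expands the interpolation weights as $\lambda+O(h/x_i)$ and $1-\lambda+O(h/x_i)$ so that the surviving first-order terms recombine into $\lambda(1-\lambda)h\bigl(G'(x)-G'(x_i)\bigr)$ times a bounded factor plus errors of the form $hO(h/x_i)G'$ --- a fairly heavy computation. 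You instead obtain an exact error representation: on a cell $\Pi_nG=a+b\phi$ with $\phi(x)=x^{\alpha-1}$, the weighted mean value theorem gives $b=\rho(\xi)$ with $\rho=G'/\phi'$, hence $G(x)-\Pi_nG(x)=\int_{x_i}^{x}\phi'(s)\bigl(\rho(s)-\rho(\xi)\bigr)\,ds$, and the error is at most $\operatorname{osc}(\rho)\,\Delta\phi$. This collapses everything to a single oscillation bound in which the same two regularity inputs enter (the H\"older bound for $t^{2-\alpha}(G'(t)-G'(s))$; the growth bound paired, as you correctly insist, with the mean value estimate $|t^{2-\alpha}-s^{2-\alpha}|\le(2-\alpha)x_i^{1-\alpha}h$ rather than the H\"older bound, so the $x_i$ powers cancel uniformly in $i$), and it renders the near-boundary cell trivial via $|\rho(s)|\le C\|g\|_\infty s$. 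Your route buys brevity and structural transparency --- exactness on $x^{\alpha-1}$ appears exactly as the statement that $\phi'$ is the natural integration weight --- at no cost in elementarity; the paper's route, while more laborious, displays the error constants explicitly. Both arguments use only continuity and $\|g\|_\infty$, so your version also preserves the paper's remark that $g(1)=0$ is never needed.
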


\begin{proof}
Let $f\in D(A^\alpha )$ and $x=x_i+\lambda h$ for some $x_i=ih$  and $0\le\lambda\le 1$. Then by \eqref{domain}, $f=f_0-f_0(1)x^{\alpha-1}$ with $f_0(x)=\int_0^x \frac{(x-s)^{\alpha-1}}{\Gamma(\alpha)}g(s)\,ds$ for some $g\in C_0(0,1)$. As the power interpolation is exact for $x^{\alpha-1}$, it follows that
\begin{align*}
    \Pi_nf(x)-f(x)=& \frac{x^{\alpha-1}-x_i^{\alpha-1}}{(x_i+h)^{\alpha-1}-x_i^{\alpha-1}}f_0(x_i+h)+\frac{(x_i+h)^{\alpha-1}-x^{\alpha-1}}{(x_i+h)^{\alpha-1}-x_i^{\alpha-1}}f_0(x_i)\\&-f_0(x)\\
    =&\frac{x^{\alpha-1}-x_i^{\alpha-1}}{(x_i+h)^{\alpha-1}-x_i^{\alpha-1}}(f_0(x_i+h)-f_0(x))\\
    &+\frac{(x_i+h)^{\alpha-1}-x^{\alpha-1}}{(x_i+h)^{\alpha-1}-x_i^{\alpha-1}}(f_0(x_i)-f_0(x)).
\end{align*}
Furthermore, 
\begin{align*}
   \left| f_0(x_i+h)\right.&\left.-f_0(x)-(1-\lambda)hf_0'(x)\right|\\=&\,
   \left|\int_0^{x_i+h}\frac{(x_i+h-s)^{\alpha-1}}{\Gamma(\alpha)}g(s)\,ds\right.-
    \int_0^{x}\frac{(x-s)^{\alpha-1}}{\Gamma(\alpha)}g(s)\,ds\\
    &\,\left.-(1-\lambda)h\int_0^{x}\frac{(x-s)^{\alpha-2}}{\Gamma(\alpha-1)}g(s)\,ds\right|\\
    =&\left|\int_0^{x}\frac{(x_i+h-s)^{\alpha-1}-(x-s)^{\alpha-1}-(\alpha-1)(1-\lambda)h(x-s)^{\alpha-2}}{\Gamma(\alpha)}g(s)\,ds\right.\\
    &\,\left.+\int_x^{x_i+h}\frac{(x_i+h-s)^{\alpha-1}}{\Gamma(\alpha)}g(s)\,ds\right|\\
    \le&\,\|g\|\left| \frac{(x_i+h-x)^\alpha-(x_i+h)^{\alpha}+x^\alpha+\alpha(1-\lambda) h x^{\alpha-1}}{\Gamma(\alpha+1)}\right|\\
    &+\|g\|\frac{(x_i+h-x)^\alpha}{\Gamma(\alpha+1)}\\
    \le&\,\|g\|\left(2\frac{(1-\lambda)^\alpha h^\alpha}{\Gamma(\alpha+1)}+\left|\frac{(x+(1-\lambda)h)^\alpha-x^\alpha-\alpha(1-\lambda)hx^{\alpha-1}}{\Gamma(\alpha+1)}\right|\right).
\end{align*}
For $x\ge h$ the right hand side is maximal at $x=h$ and hence
\[ \left| f_0(x_i+h)\right.\left.-f_0(x)-(1-\lambda)hf_0'(x)\right|\le Ch^\alpha.\]
Similarly,
\[\left| f_0(x)-f_0(x_i)-\lambda hf_0'(x_i)\right|\le Ch^\alpha.\]
Hence 
\begin{align*}
    \Pi_nf(x)-f(x)=& \frac{x^{\alpha-1}-x_i^{\alpha-1}}{(x_i+h)^{\alpha-1}-x_i^{\alpha-1}}(1-\lambda)hf_0'(x)\\
    &-\frac{(x_i+h)^{\alpha-1}-x^{\alpha-1}}{(x_i+h)^{\alpha-1}-x_i^{\alpha-1}}\lambda hf_0'(x_i)+O(h^\alpha).
\end{align*}
Observe that
\begin{align*}
    \frac{x^{\alpha-1}-x_i^{\alpha-1}}{(x_i+h)^{\alpha-1}-x_i^{\alpha-1}}
    =&\,\frac{(\alpha-1)\lambda hx_i^{\alpha-2}+O( h^2x_i^{\alpha-3})}{(x_i+h)^{\alpha-1}-x_i^{\alpha-1}}\\
    =&\, \frac{\lambda+O( h/x_i)}{(x_i+h)^{\alpha-1}-x_i^{\alpha-1})/((\alpha-1)hx_i^{\alpha-2})},
\end{align*}
and as the denominator is $1+O(h/x_i)$,
\[  \frac{(x_i+h)^{\alpha-1}-x^{\alpha-1}}{(x_i+h)^{\alpha-1}-x_i^{\alpha-1}}= \frac{1-\lambda+O( h/x_i)}{(x_i+h)^{\alpha-1}-x_i^{\alpha-1})/((\alpha-1)hx_i^{\alpha-2})}\]
and therefore
\begin{align*}
\Pi_nf(x)-f(x)=&\frac{\lambda(1-\lambda)h}{((x_i+h)^{\alpha-1}-x_i^{\alpha-1})/(\alpha-1)hx_i^{\alpha-2}}\left(f_0'(x)-f_0'(x_i)\right)\\
&+(1-\lambda) h O(h/x_i)f_0'(x)+ \lambda h O(h/x_i)f_0'(x_i).\end{align*}
Since $|f'_0(x)-f_0'(x_i)|\le \|g\|h^{\alpha-1}/\Gamma(\alpha)$ and $|f'_0(x)|\le \|g\|x^{\alpha-1}/\Gamma(\alpha)$, it follows that 
\begin{align*}
\left|\Pi_nf(x)-f(x)\right|\le &\frac{\lambda(1-\lambda)h}{(2^{\alpha-1}-1)/(\alpha-1)}h^{\alpha-1}\|g\|/\Gamma(\alpha)\\
&+Ch^2\left(\frac{(x_i+h)^{\alpha-1}}{x_i}+\frac{x_i^{\alpha-1}}{x_i}\right)\\
\le & Ch^\alpha\|g\|=Ch^\alpha\|A^\alpha f\|,\end{align*}
as decreasing $x_i$ increases the right hand side; i.e., the right hand side is maximal if $x_i=h$.

In case $x<h$, $x_i=0$ and hence
\[\left|\Pi_nf(x)-f(x)\right|=\frac{x^{\alpha-1}}{h^{\alpha-1}}|f_0(h)|\le \frac{h^\alpha}{\Gamma(\alpha+1)}\|g\|=\frac{h^\alpha}{\Gamma(\alpha+1)}\|A^\alpha f\|
\]
as well.
\end{proof}
\begin{rem}
Note that in the above proof the fact that $g(1)=0$ is not needed.
\end{rem}

\begin{prop}\label{inverseEst}
Let $g\in C_0(0,1)$. Assume there exists $C_g>0$ such that for all $n,j$ with $0\le j\le n-1$ and $h=1/(n+1), x_j=jh$ ,
\[\left|\frac{g(x_{j+1})-g(x_j)}{x_{j+1}^{\alpha-1}-x_j^{\alpha-1}}\right|<C_g.\]
Then there exists $C>0$ independent of $g$ such that
\[ \|\left(A^\alpha\right) ^{-1}\Pi_n g-\left(A^\alpha_h\right)^{-1}\Pi_n g\|\le Ch^\alpha( C_g+\|g\|).\]
\end{prop}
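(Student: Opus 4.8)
The plan is to split the error into a projection part and a stability-times-consistency part, estimating each separately on the grid. Set $u:=(A^\alpha)^{-1}\Pi_n g$, so that by \eqref{domain} we have $u=I^\alpha(\Pi_n g)-I^\alpha(\Pi_n g)(1)\,x^{\alpha-1}$ and $A^\alpha u=\Pi_n g$. Since $(A^\alpha_h)^{-1}$ acts on $V_n$ as $E_n\mathbf{M}_h^{-1}(\,\cdot\,(x_i))_i$, I would start from the algebraic identity
\[(A^\alpha)^{-1}\Pi_n g-(A^\alpha_h)^{-1}\Pi_n g=(I-\Pi_n)u-(A^\alpha_h)^{-1}\big[\Pi_n A^\alpha-A^\alpha_h\Pi_n\big]u,\]
which holds because $\Pi_n A^\alpha u=\Pi_n(\Pi_n g)=\Pi_n g$ and $(A^\alpha_h)^{-1}A^\alpha_h\Pi_n u=\Pi_n u$. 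The first term is immediate from Proposition \ref{projectionrate}: as $\|A^\alpha u\|=\|\Pi_n g\|\le\|g\|$, one gets $\|(I-\Pi_n)u\|\le Ch^\alpha\|g\|$. For the second term, note that $\big[\Pi_n A^\alpha-A^\alpha_h\Pi_n\big]u$ has grid values $\tau_i:=g(x_i)-[\mathbf{M}_h(u(x_j))_j]_i$, and since the power interpolation $E_n$ never exceeds the maximal modulus of the interpolated grid values, the whole term is bounded in $V_n$ by $\|\mathbf{M}_h^{-1}\|_\infty\max_i|\tau_i|$. Thus the proof reduces to a \emph{stability} bound on $\mathbf{M}_h^{-1}$ and a \emph{consistency} bound on $\tau$.

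For stability I would invoke Proposition \ref{conjecture1}: since $\mathbf{M}_h$ is a $Q$-matrix, $-\mathbf{M}_h$ is a nonsingular M-matrix, so $-\mathbf{M}_h^{-1}\ge0$ entrywise and hence $\|\mathbf{M}_h^{-1}\|_\infty=\max_i z_i$ where $\mathbf{M}_h\vec z=-\vec 1$. I would bound $\vec z$ by a discrete maximum principle, comparing it with the grid values of a fixed bounded continuous super-solution of $A^\alpha Z=-1$, namely $Z(x)=(x^{\alpha-1}-x^\alpha)/\Gamma(\alpha+1)\ge0$; since $x^{\alpha-1}$ is reproduced exactly by the scheme and $x^\alpha$ is handled up to a controllable $O(h^\alpha)$ correction, the comparison yields $\|\mathbf{M}_h^{-1}\|_\infty\le C$ uniformly in $n$, mirroring the boundedness of the continuous resolvent at $0$.

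The consistency estimate is the \emph{crux}. The first step is to use the exactness of the scheme on the singular power, $[\mathbf{M}_h(x_j^{\alpha-1})_j]_i=0$ for $i\ge1$ (equivalently $A^\alpha_h x^{\alpha-1}=0$), which is built into \eqref{newMatrixgen}; this annihilates the $I^\alpha(\Pi_n g)(1)\,x^{\alpha-1}$ contribution and leaves $\tau_i=g(x_i)-[\mathbf{M}_h(F(x_j))_j]_i$ with $F:=I^\alpha(\Pi_n g)$. Because $A^\alpha F=\Pi_n g$ and $\Pi_n g(x_i)=g(x_i)$, this $\tau_i$ is exactly the pointwise grid error of the difference scheme applied to $F$. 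I would then exploit the piecewise structure $\Pi_n g|_{[x_j,x_{j+1}]}=a_jx^{\alpha-1}+b_j$, where the slopes $a_j$ are precisely the divided differences controlled by $C_g$ and $|b_j|\le C(\|g\|+C_g)$. Splitting off the globally reproduced $x^{\alpha-1}$ part and writing the remaining truncation error (via summation by parts) as a superposition of single-kink scheme errors weighted by the slope increments $a_j-a_{j-1}$, the interior contributions are controlled by the order-$\alpha$ accuracy of the scheme on the smooth pieces of $F$ (cf. Theorem \ref{l1convergence}), giving $O(h^\alpha\|g\|)$, while near the left boundary the exactness for $x^{\alpha-1}$ together with the uniform bound $|a_j|<C_g$ tames the otherwise dangerous terms, producing $\max_i|\tau_i|\le Ch^\alpha(C_g+\|g\|)$.

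Combining the projection estimate with $\|\mathbf{M}_h^{-1}\|_\infty\le C$ and $\max_i|\tau_i|\le Ch^\alpha(C_g+\|g\|)$ gives the claim. The main obstacle is the consistency step: since $F=I^\alpha(\Pi_n g)$ is only $C^\alpha$-regular and $\Pi_n g$ has a slope change at every node, the naive truncation error at nodes near $0$ is of order one, and it is only after using the scheme's exactness for $x^{\alpha-1}$ and re-expressing the error through the $C_g$-bounded divided differences that the uniform $h^\alpha$ rate survives; keeping the boundary kink contributions summable without losing a power of $h$ is the delicate part.
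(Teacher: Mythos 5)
Your Lax-type decomposition (projection error plus $(A^\alpha_h)^{-1}$ applied to the commutator $[\Pi_n A^\alpha-A^\alpha_h\Pi_n]u$) is algebraically correct, and the projection half is fine via Proposition \ref{projectionrate}. The fatal gap is the consistency step: the claimed bound $\max_i|\tau_i|\le Ch^\alpha(C_g+\|g\|)$ is false. Test it with $g(x)=x^{\alpha-1}$ modified near $x=1$ so that $g(1)=0$ (so $C_g=1$, and nothing near the left boundary is affected). Near $0$ one has $u=(A^\alpha)^{-1}\Pi_n g=\frac{\Gamma(\alpha)}{\Gamma(2\alpha)}x^{2\alpha-1}-cx^{\alpha-1}$; the $x^{\alpha-1}$ part is annihilated exactly, but for the other part the Fourier representation $A_h^\alpha f=f^{(\alpha)}+h^{\alpha-1}g_\alpha(\cdot/h)*f^{(2\alpha)}$ (this is exactly the computation in the Lemma that follows this proposition in the paper) gives, at the node $x_i$, the value $x_i^{\alpha-1}/\Gamma(\alpha)+h^{\alpha-1}g_\alpha(i)$. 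Hence $\tau_1=-\Gamma(\alpha)h^{\alpha-1}g_\alpha(1)$, which is of order $h^{\alpha-1}$, not $h^\alpha$. Pointwise consistency of order $\alpha$ simply fails at nodes within $O(h)$ of the left boundary, even for the most regular admissible data — this is precisely the boundary phenomenon the whole paper is about — and no rearrangement by summation by parts over slope increments can remove it. Consequently your product of $\ell^\infty$-stability and $\ell^\infty$-consistency can only yield $O(h^{\alpha-1})$. (The same misconception appears in your stability step: the scheme applied to $x^\alpha$ is \emph{not} $1+O(h^\alpha)$ near the boundary; that step survives only because you merely need a positive lower bound there.)

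The paper's proof sidesteps truncation errors entirely, and the ingredient it relies on is one your proposal never uses: the weights were designed so that $\mathbf{M}_h^{-1}$ is known in closed form,
\[
\left(\mathbf{M}_h^{-1}e_j\right)_i=h\left(H(i-j)\frac{(ih-jh)^{\alpha-1}}{\Gamma(\alpha)}-(ih)^{\alpha-1}\frac{(1-jh)^{\alpha-1}}{\Gamma(\alpha)}\right),
\]
i.e.\ the discrete Green's function is exactly $h$ times the continuous Green's function sampled on the grid. Therefore $(A^\alpha_h)^{-1}\Pi_n g(x_i)$ is exactly the trapezoidal-rule approximation of the integral formula for $(A^\alpha)^{-1}\Pi_n g(x_i)$, and the proposition reduces to cell-by-cell trapezoidal quadrature estimates: the $C_g$ hypothesis controls the summands carrying divided differences, $\|g\|$ controls the rest, and the local $h^3$-errors sum to $O(h^\alpha)$ via a $\zeta(3-\alpha)$ series and a Beta-function Riemann sum. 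If you insist on a stability/consistency structure, it can be repaired, but only by abandoning the sup-norm on $\tau$: one needs an $\ell^1$-type consistency bound $h\sum_i|\tau_i|\le Ch^\alpha(C_g+\|g\|)$ (the $O(h^{\alpha-1})$ errors are concentrated near the boundary and summable, since $g_\alpha\in L^1$) combined with the stability $\|\mathbf{M}_h^{-1}\|_{\ell^1\to\ell^\infty}\le Ch$ — and that entrywise bound again comes from the explicit inverse. Either way, the closed-form inverse is the missing idea.
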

\begin{proof}
Note that for $\lambda=0$ the formula for the negative resolvent reads as 
\[\left(A^\alpha\right)^{-1}g(x)=\int_0^x \frac{(x-y)^{\alpha-1}}{\Gamma(\alpha)}g(y)\,dy-x^{\alpha-1}\int_0^1 \frac{(1-y)^{\alpha-1}}{\Gamma(\alpha)}g(y)\,dy,\]
which implies that 
\begin{align*}\left(A^\alpha\right)^{-1}\Pi_n& g(x_i)=\\
&\sum_{j=0}^{i-1}\int_{x_j}^{x_{j+1}}\frac{(x_i-y)^{\alpha-1}}{\Gamma(\alpha)}\left[\frac{y^{\alpha-1}-x_j^{\alpha-1}}{x_{j+1}^{\alpha-1}-x_j^{\alpha-1}}(g(x_{j+1})-g(x_j))+g(x_{j})\right]dy\\
-x_i^{\alpha-1}&\sum_{j=0}^{n}\int_{x_j}^{x_{j+1}}\frac{(1-y)^{\alpha-1}}{\Gamma(\alpha)}\left[\frac{y^{\alpha-1}-x_j^{\alpha-1}}{x_{j+1}^{\alpha-1}-x_j^{\alpha-1}}(g(x_{j+1})-g(x_j))+g(x_{j})\right]dy.
\end{align*}
Let $(n+1)h=1$, $x_j=jh$; $j=1,\ldots, n$, and $\textbf{M}_h$ be defined by  \eqref{eq:mn}. Then
\[\left(M_h^{-1}e_j\right)_i=h\left(H(i-j)\frac{(ih-jh)^{\alpha-1}}{\Gamma(\alpha)}-(hi)^{\alpha-1}\frac{(1-jh)^{\alpha-1}}{\Gamma(\alpha)}\right),\]
where $H$ is the Heaviside step function.
Hence 
\begin{align*}\left(A^\alpha _h\right)^{-1}\Pi_n g(x_i)=&\sum_{j=1}^i h\frac{(x_i-x_j)^{\alpha-1}}{\Gamma(\alpha)}g(x_j)-x_i^{\alpha-1}\sum_{j=1}^{n+1}h\frac{(1-x_j)^{\alpha-1}}{\Gamma(\alpha)}g(x_j)\\
=&\frac{h}2 \left(\sum_{j=0}^{i-1} \frac{(x_i-x_{j+1})^{\alpha-1}}{\Gamma(\alpha)}g(x_{j+1})+\sum_{j=0}^{i-1} \frac{(x_i-x_j)^{\alpha-1}}{\Gamma(\alpha)}g(x_{j})\right)\\
-&x_i^{\alpha-1}\frac h2\left( \sum_{j=0}^{n}\frac{(1-x_{j+1})^{\alpha-1}}{\Gamma(\alpha)}g(x_{j+1})+ \sum_{j=0}^{n}\frac{(1-x_{j})^{\alpha-1}}{\Gamma(\alpha)}g(x_{j})\right),
\end{align*}
using that $g(0)=0$. Note that $\left(A^\alpha _h\right)^{-1}\Pi_n g(x_i)$ is the trapezoidal rule approximation of $\left(A^\alpha \right)^{-1}\Pi_n g(x_i)$ and that the second integrals are a special case of the first integrals using $x_i=1$. 

Comparing each summand of $\left(A^\alpha\right) ^{-1}\Pi_ng(x_i)-\left(A^\alpha _h\right)^{-1}g(x_i)$ for $x_i\ge x_{j+1}$ and $i\le n+1$, the trapezoidal approximation yields 
\begin{align*}\frac{g(x_{j+1})-g(x_j)}{x_{j+1}^{\alpha-1}-x_j^{\alpha-1}}&\left(\int_{x_j}^{x_{j+1}}\frac{(x_i-y)^{\alpha-1}}{\Gamma(\alpha)}(y^{\alpha-1}-x_j^{\alpha-1})\,dy\right.\\&\;-\left.\frac{h}2 \frac{(x_i-x_{j+1})^{\alpha-1}(x_{j+1}^{\alpha-1}-x_j^{\alpha-1})}{\Gamma(\alpha)}\right)\\
=&\frac{g(x_{j+1})-g(x_j)}{x_{j+1}^{\alpha-1}-x_j^{\alpha-1}} h^3\frac{f''(\xi_j)}{12},
\end{align*}
and 
\begin{align*}
    g(x_j)&\left(\int_{x_j}^{x_{j+1}}\frac{(x_i-y)^{\alpha-1}}{\Gamma(\alpha)}\,dy-\frac{h}2\frac{(x_i-x_{j+1})^{\alpha-1}+(x_i-x_j)^{\alpha-1}}{\Gamma(\alpha)}\right)\\
    =&-h^3g(x_j)\frac{(x_i-\xi_j)^{\alpha-3}}{12\Gamma(\alpha-2)}
\end{align*}
for some $\xi_j\in [x_j,x_{j+1}]$ and $f(y)=\frac{(x_i-y)^{\alpha-1}}{\Gamma(\alpha)}(y^{\alpha-1}-x_j^{\alpha-1})$.

Note that for $0<j<i-1$,
\[h^3(x_i-\xi_j)^{\alpha-3}\le h^{\alpha}(i-j-1)^{\alpha-3},\]
\begin{align*}h^3(x_i-\xi_{j})^{\alpha-2}\xi_j^{\alpha-2}\le &\,h^3 (x_i-x_{j+1})^{\alpha-2}x_j^{\alpha-2}\\
=&\,h^2(hi)^{2\alpha-3}\left(\frac1i \left(1-\frac{j+1}i\right)^{\alpha-2}\left(\frac{j}i\right)^{\alpha-2}\right)\\
\le &\, \max\{h^{2\alpha-1},h^2\}\left(\frac1i \left(1-\frac{j+1}i\right)^{\alpha-2}\left(\frac{j}i\right)^{\alpha-2}\right),
\end{align*}
and
\[h^3(x_i-\xi_j)^{\alpha-1}\xi^{\alpha-3}\le h^{\alpha}j^{\alpha-3}.\]
As \[\sum_{j=1}^{i-2}(i-j-1)^{\alpha-3}\le \sum_{j=1}^\infty j^{\alpha-3}=\zeta(3-\alpha) < \infty\] and \[\sum_{j=1}^{i-2}\frac1i \left(1-\frac{j+1}i\right)^{\alpha-2}\left(\frac{j}i\right)^{\alpha-2}\to \int_0^1(1-y)^{\alpha-2}y^{\alpha-2}\,dy,\]
the sums are bounded independent of $i$. Since for $j=0$ or $j=i-1$ the error is also of order $h^\alpha$, the theorem follows.
\end{proof}

The next lemma sets the scene for the regularity needed of the initial condition to facilitate higher order convergence of our finite difference scheme. We require regularity of order $\alpha+1$ except for the two terms containing $x^{\alpha-1}$ and $x^{2\alpha-1}$ usually found as part of elements in $D((A^\alpha) ^\infty)$ ensuring that if $f\in D((A^\alpha)^2)$ then $f$ will have the required regularity.   

\begin{lem}
Let \[f(x)=ax^{\alpha-1}+bx^{2\alpha-1}+\int_0^x\frac{(x-s)^{\alpha}}{\Gamma(\alpha+1)}g(s)\,ds\] for some $g\in C(0,1]$ and assume $f(1)=0$. Then $A^\alpha_h\Pi_n f$ satisfies the conditions of Proposition \ref{inverseEst} with \[C_{A^\alpha_h\Pi_n f}\le C(|b|+\|g\|).\] Furthermore, 
\[\|A^\alpha_h\Pi_nf\|\le C(|b|+\|\int g\|).\]
\end{lem}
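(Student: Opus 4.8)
The plan is to measure $\tilde g:=A^\alpha_h\Pi_n f$ against the exact fractional derivative $q:=(d/dx)^\alpha f=\frac{\Gamma(2\alpha)}{\Gamma(\alpha)}b\,x^{\alpha-1}+G$, where $G(x):=\int_0^x g(s)\,ds$ (so $\|G\|_\infty=\|\int g\|$ and $G'=g$, $G(0)=0$). Since $\tilde g$ depends only on the grid values $f(x_1),\dots,f(x_n)$ and the map $v\mapsto E_n\mathbf{M}_h(v(x_i))_i$ is linear, I split $f=ax^{\alpha-1}+bx^{2\alpha-1}+I^{\alpha+1}g$ and treat each summand $p$ via $\phi_p(x_i)=h^{-\alpha}\sum_{m=0}^i w_m\,p(x_{i-m+1})$, with the convention that at the boundary node the virtual value $p(x_{n+1})=p(1)$ is included in the $m=0$ term. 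The three virtual values sum to $f(1)=0$, so $\sum_p\phi_p$ agrees with the honest $n\times n$ action of $\mathbf{M}_h$. The defining relation \eqref{newMatrixgen}, i.e. $\sum_{m=0}^{l-1}w_m(l-m)^{\alpha-1}=\Gamma(\alpha)\delta_{l,1}$, shows that $\mathbf{M}_h$ annihilates $(x_i^{\alpha-1})_i$ at every node $1\le i\le n$; hence the $ax^{\alpha-1}$-term drops out and it suffices to analyse $f=bx^{2\alpha-1}+I^{\alpha+1}g$, whose exact derivative is exactly $q$.

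\textbf{Passing to the consistency error.} Next I check that both assertions hold with $\tilde g$ replaced by $q$. The power-weighted slope of the first term of $q$ is the constant $\frac{\Gamma(2\alpha)}{\Gamma(\alpha)}b$; for $G$ I bound $|G(x_{j+1})-G(x_j)|\le h\|g\|$ and, using concavity of $x^{\alpha-1}$, $x_{j+1}^{\alpha-1}-x_j^{\alpha-1}\ge(\alpha-1)h^{\alpha-1}(j+1)^{\alpha-2}$, so that the quotient is $\le Cx_{j+1}^{2-\alpha}\|g\|\le C\|g\|$; likewise $|q(x_i)|\le\frac{\Gamma(2\alpha)}{\Gamma(\alpha)}|b|+\|G\|_\infty$. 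By the triangle inequality it therefore remains to control the consistency error $e:=\tilde g-q$ at the nodes, namely to prove $\|e\|_\infty\le C(|b|+\|\int g\|)$ and $|e(x_{j+1})-e(x_j)|\le C\,(x_{j+1}^{\alpha-1}-x_j^{\alpha-1})(|b|+\|g\|)$; the two claims then drop out.

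\textbf{The two pieces of $e$.} For the power piece, $A^\alpha_h\Pi_n(x^{2\alpha-1})(x_i)=h^{\alpha-1}(w*a)_{i+1}$ with $a_k=k^{2\alpha-1}$ and generating function $\Gamma(\alpha)\,t\,\Li_{1-2\alpha}(t)/\Li_{1-\alpha}(t)$; inserting the polylogarithm expansion \eqref{zetaexpansion} at $t=1$ shows that the $\mu^{-\alpha}$-singularity of this function is cancelled exactly by that of $\frac{\Gamma(2\alpha)}{\Gamma(\alpha)}\Li_{1-\alpha}(t)$, the generating function of the target sequence $\frac{\Gamma(2\alpha)}{\Gamma(\alpha)}i^{\alpha-1}$, so the error generating function is regular at $t=1$. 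Running the keyhole-contour argument of Proposition \ref{conjecture1} on this regularised function then yields the decay of the scaled error and of its first difference needed above. For the integral piece I interchange summation and integration to write $A^\alpha_h\Pi_n(I^{\alpha+1}g)(x_i)=\int_0^1\kappa_i(s)G(s)\,ds$, where $\kappa_i(s)=h^{-\alpha}\sum_m w_m(x_{i-m+1}-s)_+^{\alpha-1}/\Gamma(\alpha)$ is the scheme applied to the shifted power $(x-s)_+^{\alpha-1}/\Gamma(\alpha)$. Since the weights are exact for $x^{\alpha-1}/\Gamma(\alpha)$, the kernel $\kappa_i$ is an approximate identity concentrated at $s=x_i$; the supremum bound reduces to the uniform estimate $\sup_i\int_0^1|\kappa_i(s)|\,ds\le C$, giving $|A^\alpha_h\Pi_n(I^{\alpha+1}g)(x_i)|\le C\|G\|_\infty$, and the slope bound follows from $|G(s)-G(x_i)|\le\|g\|\,|s-x_i|$ together with the tail decay of $w_m$, $m\ge2$, from Proposition \ref{conjecture1}.

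\textbf{Main obstacle.} The hard part is the increment estimate for $e$: because $x_{j+1}^{\alpha-1}-x_j^{\alpha-1}\sim(\alpha-1)h^{\alpha-1}j^{\alpha-2}$ is tiny for large $j$, the consecutive error increments must be controlled far more sharply than the $O(h^\alpha)$ pointwise consistency. For the integral piece this forces a summation by parts that shifts a difference onto the weights and exposes the antiderivative $G$ — which is exactly what produces the sharp $\|\int g\|$-dependence (rather than $\|g\|$) in the supremum bound — and the quantitative input making both the convolution tail and the kernel mass $\int_0^1|\kappa_i|\,ds$ uniformly summable is the decay rate of $w_m$, $m\ge2$, established in Proposition \ref{conjecture1}. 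I also expect the boundary node $i=n$, where the $w_0$-column of $\mathbf{M}_h$ is absent, to require the cancellation $\sum_p p(1)=f(1)=0$ recorded above.
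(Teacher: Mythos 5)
Your skeleton---killing the $ax^{\alpha-1}$ term via \eqref{newMatrixgen}, comparing $A^\alpha_h\Pi_nf$ with the exact derivative $q=\tfrac{\Gamma(2\alpha)}{\Gamma(\alpha)}bx^{\alpha-1}+\int_0^xg$, verifying the slope and supremum conditions for $q$ itself, and reducing to the consistency error---is exactly the skeleton of the paper's proof, and your verification for $q$ is correct. The gap is that the two quantitative estimates on which everything then rests are asserted rather than proved, and the source you cite for them is not available. Proposition \ref{conjecture1} establishes only the signs $w_1<0$, $w_j>0$ ($j\neq1$) and $\sum_j w_j=0$; it contains no decay rate for $w_m$. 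Worse, even granting the (true) decay $w_m=O(m^{-1-\alpha})$, your key bound $\sup_i\int_0^1|\kappa_i(s)|\,ds\le C$ cannot be reached by estimating the weights in absolute value: since $\int_0^1(x_{i-m+1}-s)_+^{\alpha-1}\,ds=x_{i-m+1}^{\alpha}/\alpha$, the triangle inequality produces among others the single summand $h^{-\alpha}|w_1|x_i^{\alpha}/\Gamma(\alpha+1)=|w_1|\,i^{\alpha}/\Gamma(\alpha+1)$, which is unbounded in $i$; even one summation by parts, using $\bigl|\sum_{l\le m}w_l\bigr|\lesssim m^{-\alpha}$, still leaves a factor of order $i^{\alpha-1}$. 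The bound holds only because of cancellation between the negative weight $w_1$ and the positive ones, and capturing that cancellation is precisely the content of Section \ref{infdomain}: by translation invariance of the scheme, $\kappa_i(s)=h^{-1}g\bigl((x_i-s)/h\bigr)$ where $g$ is the inverse Fourier transform of $\hat g(k)=\Gamma(\alpha)/\bigl((ik)^{\alpha}\Li_{1-\alpha}(e^{-ik})\bigr)$, so your mass estimate is literally the statement $g\in L^1(\R)$ proved in Theorem \ref{l1convergence} via the Carlson--Beurling inequality; no argument from weight decay alone can replace it. The same criticism applies to the increment estimates, which you yourself single out as the ``main obstacle'' and then describe (``forces a summation by parts\dots'') without carrying out, and to the power piece, where the keyhole/generating-function computation is a plausible plan but is left as a plan.

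For comparison, the paper closes all of these holes in a few lines by importing Section \ref{infdomain} wholesale instead of re-deriving anything: with $g_\alpha$ the function of \eqref{gbeta} it records the identity $A^\alpha_h\Pi_n\tfrac{x^{2\alpha-1}}{\Gamma(2\alpha)}(x_i)=\tfrac{x_i^{\alpha-1}}{\Gamma(\alpha)}+h^{\alpha-1}g_\alpha(i)$ (in Fourier variables this is $(ik)^{-\alpha}\hat g(kh)=(ik)^{-\alpha}+h^{\alpha}\hat g_\alpha(kh)$, the scaled-error identification you were trying to rebuild by contour methods), argues the slope condition of Proposition \ref{inverseEst} from the fact that $g_\alpha$ has a bounded continuous $L^1$ derivative, and gets the supremum bound from Theorem \ref{thm:convonR}. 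To repair your write-up with minimal change: replace every appeal to ``decay of $w_m$ from Proposition \ref{conjecture1}'' by an appeal to Theorem \ref{l1convergence}/Theorem \ref{thm:convonR}; your kernel identity then yields $\int_0^1|\kappa_i(s)|\,ds\le\|g\|_{L^1(\R)}$ at once. Note also that for the $I^{\alpha+1}g$ piece the increment bound does not require the delicate analysis you anticipate: since $x_{j+1}^{\alpha-1}-x_j^{\alpha-1}\ge(\alpha-1)h$ on $[0,1]$, a pointwise $O(h)\|g\|$ consistency estimate suffices, though proving that estimate again needs decay/moment information on $g$ of the Section \ref{infdomain} type, not just properties of the weights.
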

\begin{proof}
By design, $A^\alpha_h\Pi_n x^{\alpha-1}=0$. Consider $g_\alpha$ of equation \eqref{gbeta}. Then
\[A^\alpha_h\Pi_n \frac{x^{2\alpha-1}}{\Gamma(2\alpha)}(x_i)= \frac{x_i^{\alpha-1}}{\Gamma(\alpha)}+h^{\alpha}g_\alpha\left(\frac{x_i}h\right)/h=\frac{x_i^{\alpha-1}}{\Gamma(\alpha)}+h^{\alpha-1}g_\alpha\left(i\right).\] As $g_\alpha\in L^1$ and $g_\alpha$ is $\alpha$ times differentiable, its derivative is also a bounded continuous $L^1$ function. As any function with a bounded continuous derivative satisfies the conditions of Proposition \ref{inverseEst} so does  $A^\alpha_h\Pi_n x^{2\alpha-1}$ and the last term of $f$. 

As by Theorem \ref{thm:convonR} $A^\alpha_h\Pi_n f\to b\frac{\Gamma(\alpha+1)}{\Gamma(\alpha)}x^{\alpha-1}+\int_0^x g(s)\,ds$ uniformly on grid points, 
\[\|A^\alpha_h\Pi_nf\|\le C(|b|+\|\int g\|).\]
\end{proof}

\begin{thm}
Let \[f(x)=ax^{\alpha-1}+bx^{2\alpha-1}+\int_0^x\frac{(x-s)^{\alpha}}{\Gamma(\alpha+1)}g(s)\,ds\]  with $f(1)=0$ and some $g\in C(0,1]$ be the initial condition to \eqref{FPDE}. Then, for $t>h^\alpha$, there exists a constant $C>0$ such that
\[\|S(t)f-S_h(t)\Pi_nf\|\le Ch^\alpha(1+\log(t/h^\alpha))(\|f\|+|b|+\|g\|).\]
\end{thm}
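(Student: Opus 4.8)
The plan is to run a nonsmooth-data parabolic error analysis of Thom\'ee type, organised around a discrete elliptic (Ritz) projection, and to exploit that the \emph{inverse} operators are $O(h^\alpha)$-close by Proposition \ref{inverseEst} even though the generators $A^\alpha$ and $A^\alpha_h$ themselves only agree to order $h^{\alpha-1}$. Writing $u(s)=S(s)f$, $\psi(s)=A^\alpha u(s)=S(s)A^\alpha f$ and $\psi_0=A^\alpha f$, I would introduce the Ritz projection $R_h:=(A^\alpha_h)^{-1}\Pi_n A^\alpha$, so that $A^\alpha_h R_h=\Pi_n A^\alpha$, set $\rho(t)=R_hu(t)-u(t)$ and $\theta(t)=S_h(t)\Pi_n f-R_hu(t)$, and split
\[S(t)f-S_h(t)\Pi_n f=(u(t)-R_hu(t))+(R_hu(t)-S_h(t)\Pi_n f)=-\rho(t)-\theta(t).\]
The point of routing through $R_h$ is that $\rho$ is an \emph{elliptic} error, to which Proposition \ref{inverseEst} applies with the full rate $h^\alpha$, whereas a naive Duhamel expansion in the generators would only see the order $h^{\alpha-1}$ consistency and fail to reach order $\alpha$.

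For the elliptic error I would prove $\|\rho(s)\|\le Ch^\alpha(\|\psi_0\|+|b|+\|g\|)$ \emph{uniformly} in $s$. Splitting $\rho(s)=(\Pi_n u(s)-u(s))+((A^\alpha_h)^{-1}\Pi_n\psi(s)-(A^\alpha)^{-1}\Pi_n\psi(s))+(A^\alpha)^{-1}(\Pi_n-I)\psi(s)$, the first piece is handled by Proposition \ref{projectionrate} via $\|A^\alpha u(s)\|=\|\psi(s)\|\le M\|\psi_0\|$, with no $1/s$ loss; the second by Proposition \ref{inverseEst} together with the preceding lemma, which is precisely designed to certify that the difference-quotient constant of the relevant data is $\le C(|b|+\|g\|)$; and the last, genuinely delicate, piece I would bound by an Aubin--Nitsche duality argument, giving $\|(A^\alpha)^{-1}(\Pi_n-I)\|\le Ch^\alpha$ as an operator on $X$, so that no regularity of $\psi(s)$ beyond $\|\psi(s)\|$ is needed. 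At $s=0$ the same bound follows cleanly by feeding the data $A^\alpha_h\Pi_n f$ into Proposition \ref{inverseEst} and using $(A^\alpha_h)^{-1}A^\alpha_h\Pi_n f=\Pi_n f$, which is exactly what the preceding lemma supplies.

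For the evolution error, differentiating yields $\theta'(s)-A^\alpha_h\theta(s)=(\Pi_n-R_h)\psi(s)=\pi'(s)-\rho'(s)$, where $\pi(s)=\Pi_n u(s)-u(s)$ and $\rho'(s)$ is the elliptic error with data $A^\alpha\psi(s)=A^\alpha S(s)\psi_0$. Using only the contraction property $\|S_h(s)\|\le1$ (Proposition \ref{conjecture1}) in the variation-of-constants formula,
\[\|\theta(t)\|\le\|\theta(0)\|+\int_0^t(\|\pi'(s)\|+\|\rho'(s)\|)\,ds,\]
and here is where the logarithm appears. By the analyticity of $S(t)$ established above, $\|A^\alpha S(s)\psi_0\|\le C\|\psi_0\|/s$, so Propositions \ref{projectionrate} and \ref{inverseEst} give $\|\pi'(s)\|+\|\rho'(s)\|\le Ch^\alpha/s$ for $s\ge h^\alpha$, while the crude uniform bound $\|\pi'(s)\|+\|\rho'(s)\|\le C(\|\psi_0\|+|b|+\|g\|)$, from boundedness of $\Pi_n$, of $(A^\alpha_h)^{-1}$ and of the semigroup, is used on $[0,h^\alpha]$. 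Splitting the integral at $s=h^\alpha$ then gives
\[\int_0^{h^\alpha}C\,ds+\int_{h^\alpha}^t\frac{Ch^\alpha}{s}\,ds=Ch^\alpha(1+\log(t/h^\alpha)),\]
which is exactly why the estimate is restricted to $t>h^\alpha$; combined with the uniform bound on $\rho(t)$ and on $\theta(0)=\pi(0)-\rho(0)$ this closes the argument, after collecting $\|\psi_0\|\le C(|b|+\|g\|)$ into the stated data norm $\|f\|+|b|+\|g\|$.

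The main obstacle is regularity propagation. To apply Proposition \ref{inverseEst} to the data $\psi(s)$ and $A^\alpha\psi(s)$ I must know that the analytic semigroup preserves the structural boundary behaviour of the initial data, i.e.\ that $S(s)A^\alpha f$ still behaves like $c\,x^{\alpha-1}$ near $0$ with difference-quotient constant bounded by $C(|b|+\|g\|)$ uniformly in $s$, so that the exactness of the scheme on $x^{\alpha-1}$ can be leveraged. Establishing this, together with the duality estimate for $(A^\alpha)^{-1}(\Pi_n-I)$ and the uniform small-$s$ bounds that keep the forcing integrable across $s=h^\alpha$, is where the real work lies; once these are in hand the abstract Thom\'ee bookkeeping above is routine.
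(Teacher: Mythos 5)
Your overall architecture runs parallel to the paper's proof (peel off the interpolation error, compare the two evolutions through the closeness of the \emph{inverses} via Proposition~\ref{inverseEst} rather than through the generators, and split the time integral at $h^\alpha$ to produce the factor $1+\log(t/h^\alpha)$), but two steps you rely on are not available, and one of them is false rather than merely unproven. The crux: every application of Proposition~\ref{inverseEst} in your scheme --- to $\rho(s)$ with data $\psi(s)=S(s)A^\alpha f$, and to $\rho'(s)$ with data $A^\alpha S(s)\psi_0$ --- feeds in data propagated by the \emph{continuous} semigroup. Proposition~\ref{inverseEst} is not a norm estimate: it requires a uniform bound on the weighted difference quotients $\abs{g(x_{j+1})-g(x_j)}/(x_{j+1}^{\alpha-1}-x_j^{\alpha-1})$ of the data, and nothing in the paper or in your proposal shows that $S(s)$ preserves this seminorm with a constant $C(\abs{b}+\norm g)$ uniformly in $s$; you explicitly defer this as ``where the real work lies,'' but it is precisely the step the theorem turns on. The paper is engineered to avoid it: in its Duhamel splitting the operator difference $(A^\alpha_h)^{-1}-(A^\alpha)^{-1}$ acts on $A^\alpha_h S_h(t-r)\Pi_n f=S_h(t-r)A^\alpha_h\Pi_n f$, i.e.\ on data propagated by the \emph{discrete} semigroup; the lemma preceding the theorem bounds $C_{A^\alpha_h\Pi_n f}$ by $C(|b|+\|g\|)$, contractivity of $S_h$ (from the $Q$-matrix structure, Proposition~\ref{conjecture1}) propagates that constant, and the continuous semigroup only ever appears on the left, where analyticity supplies the $C/r$ decay. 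Your Ritz-projection route has no substitute for this mechanism. The same issue breaks your treatment of $[0,h^\alpha]$: the ``crude uniform bound'' on $\|\rho'(s)\|$ would require $\|A^\alpha S(s)\psi_0\|\le C\|\psi_0\|$ near $s=0$, i.e.\ $f\in D((A^\alpha)^2)$, which the hypotheses do not give since $g$ is only continuous; the paper instead integrates by parts on $[0,h^\alpha]$ (its term $e_1$) exactly to avoid any integrand that is unbounded at $r=0$.

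Second, the shortcut you propose to bypass regularity of $\psi(s)$ is false. The claim $\|(A^\alpha)^{-1}(\Pi_n-I)\|\le Ch^\alpha$ in operator norm on $C_0(0,1)$, ``so that no regularity of $\psi(s)$ beyond $\|\psi(s)\|$ is needed,'' cannot hold: $\Pi_n$ is an \emph{interpolation} operator, not an orthogonal projection, so its error has no cancellation for the Green's operator to exploit. Concretely, let $\psi_h\ge 0$ be the piecewise tent function with $\psi_h(x_j)=0$ at every grid point and $\psi_h=1$ at cell midpoints, so $\psi_h\in C_0(0,1)$, $\norm{\psi_h}_\infty=1$ and $(\Pi_n-I)\psi_h=-\psi_h$. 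Since $\psi_h\rightharpoonup\tfrac12$ weak-$*$ in $L^\infty$ and the kernel of $(A^\alpha)^{-1}$ is integrable in $y$ for each $x$,
\[\liminf_{h\to 0}\norm{(A^\alpha)^{-1}(\Pi_n-I)\psi_h}=\liminf_{h\to 0}\norm{(A^\alpha)^{-1}\psi_h}\ge\tfrac12\norm{(A^\alpha)^{-1}\mathbf 1}=\tfrac12\sup_{x\in(0,1)}\frac{x^{\alpha-1}(1-x)}{\Gamma(\alpha+1)}>0,\]
so the operator norm does not even tend to zero, let alone at rate $h^\alpha$. Aubin--Nitsche duality gains powers of $h$ only when the error is orthogonal to the approximating space (Galerkin or $L^2$ projections), and the sup-norm setting offers no Hilbert-space duality in any case. (A smaller slip: your three-term decomposition of $\rho(s)$ double-counts --- the second and third terms already sum to $\rho(s)$, so the extra $\Pi_n u(s)-u(s)$ should not be there.) As written, therefore, the proposal does not prove the theorem; repairing it would require establishing exactly the continuous-semigroup regularity propagation that the paper's discrete-semigroup trick was designed to circumvent.
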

\begin{proof}
First note that both semigroups are contraction semigroups. Hence
\[\|S(t)f-S(t)\Pi_nf\|\le C\|f-\Pi_nf\|\le Ch^\alpha\|A^\alpha f\|\le Ch^\alpha(|b|+\|\int g\|).\]
Next rewrite
\begin{align*}
S(t)\Pi_nf-S_{h}(t)\Pi_nf=&\,\int_0^tS(r)(A^\alpha-A^\alpha_{h})S_{h}(t-r)\Pi_nf\,dr\\
=&\,\int_0^{h^\alpha}S(r)(A^\alpha-A^\alpha_{h})S_{h}(t-r)\Pi_nf\,dr \\
&+ \int_{h^\alpha}^t A^\alpha S(r)(\left(A^\alpha_h\right)^{-1}-\left(A^\alpha\right)^{-1})A^\alpha_{h}S_{h}(t-r)\Pi_nf\,dr\\
=&\,e_1+e_2.
\end{align*}
Integration by parts then yields
\begin{align*}
    e_1=&\,\int_0^{h^\alpha}S(r)A^\alpha S_{h}(t-r)\Pi_nf\,dr -\int_0^{h^\alpha}S(r)A^\alpha_{h}S_{h}(t-r)\Pi_nf\,dr \\
    =&\,S(h^\alpha) S_h(t-h^\alpha)\Pi_nf-S_h(t)\Pi_nf,
\end{align*}
which we further decompose into
\begin{align*}
    e_1=&\,S(h^\alpha)\left[ S_h(t-h^\alpha)-S_h(t)\right]\Pi_nf+\left[S(h^\alpha)-I\right]\left(A^\alpha_h\right)^{-1}S_h(t)A^\alpha_h\Pi_nf\\
    =&\,S(h^\alpha)\left[ S_h(t-h^\alpha)-S_h(t)\right]\Pi_nf\\
    &+\left[S(h^\alpha)-I\right]\left(A^\alpha_h\right)^{-1}-\left(A^\alpha\right)^{-1})S_h(t)A^\alpha_h\Pi_nf\\
    &+\left[S(h^\alpha)-I\right]\left(A^\alpha\right)^{-1}S_h(t)A^\alpha_h\Pi_nf.
\end{align*}
Hence
\[\|e_1\|\le h^\alpha\|A^\alpha_h\Pi_nf\|+Ch^\alpha(\|A^\alpha_h\Pi_n f\|+C_{S_h(t)A^\alpha_h\Pi_nf})+h^\alpha\|A^\alpha_h\Pi_n f\|,\]
where $C_g$ is the constant of Proposition \ref{inverseEst}. As $S_h$ is a contraction, \[C_{S_h(t)A^\alpha_h\Pi_nf}\le C_{A^\alpha_h\Pi_nf}\le C(|b|+\|g\|).\]

To bound $e_2$, we use the analyticity of $S$ to get
\begin{align*}
\|e_{2}\|\leq & \int_{h^\alpha}^t \frac{C}{r}\|(\left(A^\alpha_h\right)^{-1}-\left(A^\alpha\right)^{-1})S_h(t-r)A^\alpha_h\Pi_nf\|\,dr\\
\le &\,Ch^{\alpha}\log (t/h^\alpha)(\|A^\alpha_h\Pi_nf\|+C_{A^\alpha_h\Pi_n f}).
\end{align*}
As $\|A^\alpha_h\Pi_nf\|\le C(|b|+\|\int g\|)$ and $\|\int g\|\le \|g\|$, the proof is complete.
\end{proof}

\section*{Acknowledgments}


This work is partially funded by a Marsden grant administered by the Royal Society of New Zealand. The authors would like to thank Dr Lorenzo Toniazzi and Professor Christian Lubich for many valuable discussions.


\providecommand{\bysame}{\leavevmode\hbox to3em{\hrulefill}\thinspace}
\providecommand{\MR}{\relax\ifhmode\unskip\space\fi MR }
\providecommand{\MRhref}[2]{%
  \href{http://www.ams.org/mathscinet-getitem?mr=#1}{#2}
}
\providecommand{\href}[2]{#2}

\end{document}